\title{The Rado Simplicial Complex}
\author{Michael Farber, Lewis Mead and Lewin Strauss}
\date{28 January, 2020}  
\address{School of Mathematical Sciences \\
Queen Mary University of London\\
London E1 4NS\\
United Kingdom}
\email{m.farber@qmul.ac.uk; lewis.mead@qmul.ac.uk; l.strauss@qmul.ac.uk}
\thanks{Michael Farber was partially supported by a grant from the Leverhulme Foundation.}   
\subjclass[2000]{05E45; 55U10}
\keywords{Rado graph; Rado simplicial complex; random simplicial complex; ample simplicial complex}
\newtheorem{theorem}{Theorem}
\newtheorem{corollary}[theorem]{Corollary}
\newtheorem{lemma}[theorem]{Lemma}
\newtheorem{prop}[theorem]{Proposition}
\newtheorem{definition}[theorem]{Definition}
\newtheorem{example}[theorem]{Example}
\newtheorem{remark}[theorem]{Remark}
\newcommand{\lk}{{\rm {Lk}}}
\newcommand{\Z}{{\mathbb Z}}
\newcommand{\N}{{\mathbb N}}
\newcommand{\E}{{\Bbb E}}
\newcommand{\R}{\Bbb R}
\begin{document}
\maketitle
\begin{abstract}
A Rado simplicial complex $X$ is a generalisation of the well-known Rado graph \cite{C}. $X$ is a countable simplicial complex which contains any countable simplicial complex as its induced subcomplex. The Rado simplicial complex is highly symmetric, 
it is homogeneous: 
any isomorphism between finite induced subcomplexes can be extended to an isomorphism of the whole complex. We show that the Rado complex $X$ is unique up to isomorphism and suggest several explicit constructions. We also show that a random simplicial complex on countably many vertices is a Rado complex with probability 1. The geometric realisation $|X|$ of a Rado complex is contractible and is homeomorphic to an infinite dimensional simplex. We also prove several other interesting properties of the Rado complex $X$, for example we show that removing any finite set of simplexes of $X$ gives a complex isomorphic to $X$. 
\end{abstract}
\section{Introduction}

The Rado graph $\Gamma$ is a remarkable combinatorial object, it can be characterised by its universality and homogeneity. 
The graph $\Gamma$ is universal in the sense that any graph with finitely or countably many vertices is isomorphic to an induced subgraph of $\Gamma$. Besides,  any isomorphism of between finite induced subgraphs of $\Gamma$ can be extended to the whole $\Gamma$ (homogeneity). $\Gamma$ is unique up to isomorphism and there are many different ways of constructing it explicitly. 
Erdős and Rényi \cite{ER} showed  that an infinite random graph is isomorphic to $\Gamma$ with probability 1. 
The Rado graph $\Gamma$ was introduced by Richard Rado \cite{Ra} in 1964, see also previous construction of W. Ackermann \cite{Ack}.
One may mention surprising robustness of $\Gamma$: removing any finite set of vertices and edges produces a graph isomorphic to $\Gamma$. 
Many other interesting results about the Rado graph $\Gamma$ are known, we refer the reader to the survey \cite{C}. 

The paper \cite{BTT} studies the $\rm {Fra \ddot{\imath} ss\acute e}$ limit of the class of all finite simplicial complexes. The authors show that the geometric realisation of the limit complex is homeomorphic to the infinite dimensional simplex. 

The goal of the present paper is to describe a {\it Rado simplicial complex} $X$. It is a simplicial complex with countably many vertices which is universal and homogeneous; we show that these two properties characterise $X$ up to isomorphism. R. Rado mentions universal simplicial complexes in his paper \cite{Ra} and gives a specific construction. The 1-dimensional skeleton of $X$ is a Rado graph. 
We show that $X$ can also be characterised by its {\it ampleness} which is a high-dimensional generalisation of the well-known extension property of the Rado graph. We also prove that $X$ is {\it robust}, i.e. removing any set of finitely many simplexes produces a simplicial complex isomorphic to $X$. If the set of vertices of $X$ is partitioned into finitely many parts, the simplicial complex induced on at least one of these parts is isomorphic to $X$. The link of any simplex of $X$ is a Rado complex. 
We prove that a random infinite simplcial complex is isomorphic to $X$ with probability 1. 
Besides, we show that the geometric realisation $|X|$ of the Rado complex is homeomorphic to the infinite dimensional simplex.

\section{The Definition of the Rado complex}

\subsection{Basic terminology} {\it A simplicial complex} $X$ is a set of vertexes $V(X)$ and a set of non-empty finite subsets of $V(X)$, called {\it simplexes} such that any vertex $v\in V(X)$ is a simplex $\{v\}$ and any subset of a simplex is a simplex. 
A simplicial complex $X$ is said to be countable (finite) if its vertex set 
$V(X)$ is countable (finite). The symbol $F(X)$ stands for the set of all simplexes of $X$. 
For a simplex $\sigma\in F(X)$ we shall also write $\sigma\subset X$. 

Two simplicial complexes are {\it isomorphic} if there is a bijection between their vertex sets which induces a bijection between the sets of simplexes.

The standard simplex $\Delta_n$ has the set of vertexes $\{1, 2, \dots, n\}$ with all non-empty subsets as simplexes. Another standard simplex is $\Delta_\N$; its vertex set is $\N=\{1, 2, \dots\}$ and all non-empty finite subsets of $\N$ are simplexes.

A simplicial subcomplex $Y\subset X$ is said to be {\it induced} if any simplex $\sigma\in F(X)$ with all its faces in $V(Y)$ belongs to $F(Y)$. The induced subcomplex $Y\subset X$ is completely determined by the set of its vertices, $V(Y)\subset V(X)$. We shall use the notation $Y=X_U$ where $U=V(Y)$. 

%


For a vertex $v\in V(X)$ the symbol $\lk_X(v)$ stands for {\it the link} of $v$; the latter is the union of simplexes $\sigma$ of $X$ with
$v\notin \sigma$ and $v\sigma\subset X$. 


\begin{definition}\label{def1}
(1) A countable simplicial complex $X$ is said to be {\it universal} if any countable simplicial complex is isomorphic to an induced subcomplex of $X$. (2) We say that $X$ is {\it homogeneous} if for any two finite induced subcomplexes $X_U, X_{U'}\subset X$ and for any isomorphism $f:X_U\to X_{U'}$ there exists an isomorphism $F:X\to X$ with $F|X_U=f$. (3) A countable simplicial complex $X$ is a Rado complex if it is universal and homogeneous. 
\end{definition}

It is clear that the 1-skeleton of a Rado complex is a Rado graph; the latter can be defined as a universal and homogeneous graph having countably many vertexes, see \cite{C}.

We prove in this paper: 
\begin{theorem}
Rado simplicial complexes exist and any two Rado complexes are isomorphic. 
\end{theorem}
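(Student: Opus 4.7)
The theorem splits into uniqueness and existence, and my plan is to isolate a convenient one-point extension principle that drives both parts: a back-and-forth argument for uniqueness and an inductive (or probabilistic) construction for existence.

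\textbf{Uniqueness by back-and-forth.} Let $X,X'$ be two Rado complexes and enumerate their vertex sets $V(X)=\{v_1,v_2,\dots\}$ and $V(X')=\{v'_1,v'_2,\dots\}$. I would construct an increasing chain of isomorphisms $f_n\colon X_{U_n}\to X'_{U'_n}$ between finite induced subcomplexes, arranging that $v_n\in U_{2n}$ (using $X$ at odd steps) and $v'_n\in U'_{2n+1}$ (using $X'$ at even steps); the union $\bigcup_n f_n$ is then the desired isomorphism $X\to X'$. The inductive step reduces to a one-point extension lemma: for any isomorphism $f\colon X_U\to X'_{U'}$ between finite induced subcomplexes and any $v\in V(X)\setminus U$, there exists $v'\in V(X')\setminus U'$ such that $f$ extends to an isomorphism $X_{U\cup\{v\}}\to X'_{U'\cup\{v'\}}$. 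To prove the lemma, use universality of $X'$ to embed the finite complex $X_{U\cup\{v\}}$ as an induced subcomplex of $X'$ via some $g$; then apply homogeneity of $X'$ to the finite isomorphism $f\circ g^{-1}\colon X'_{g(U)}\to X'_{U'}$ to obtain a global automorphism $h$ of $X'$ whose composition $h\circ g$ extends $f$, and set $v'=h(g(v))$.

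\textbf{Existence.} For existence I would build a countable complex with an \emph{extension property}: for every finite induced subcomplex $X_U$ and every finite complex $Y$ on $U\cup\{w\}$ extending $X_U$, some vertex $v\in V(X)\setminus U$ realises $X_{U\cup\{v\}}\cong Y$ via $\mathrm{id}_U$. Take $V(X)=\N$ and enumerate all triples $(U,Y,\mathrm{pattern})$ that may be encountered during the construction. At each stage we have only decided finitely many simplexes; we pick a fresh vertex $n\in\N$ and declare which simplexes on $U\cup\{n\}$ are in $F(X)$ so that the prescribed pattern is realised. Since each new vertex only affects simplexes containing it, no inconsistency arises with earlier choices, and with careful bookkeeping every extension demand is eventually met. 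A complex $X$ with this extension property is automatically universal (extend an embedding one vertex at a time) and homogeneous (back-and-forth against itself, using the same one-point extension lemma as above). Alternatively one may invoke the probabilistic construction advertised in the abstract: a random simplicial complex on $\N$ satisfies the extension property almost surely by a Borel--Cantelli argument.

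\textbf{Main obstacle.} The technical heart is the extension property and its construction. In contrast with the graph case, the new vertex's link is not arbitrary: it must be a subcomplex of $X_U$, and the realisability of a prescribed $Y$ depends on the simplexes of $X$ already committed on $U$. The bookkeeping has to thread this needle — enumerating pairs $(U,Y)$ in a way that postpones demands until enough of $X_U$ has been determined, while still guaranteeing that every demand is ultimately satisfied. Once this is set up, both uniqueness and existence fall out by the standard Fra\"iss\'e-style manipulations outlined above.
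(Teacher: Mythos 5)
Your proposal is correct and follows essentially the same route as the paper: the paper packages your one-point extension property as \emph{ampleness} (realising, for each finite $U$ and each subcomplex $A\subset X_U$, a vertex $v$ with $\lk_X(v)\cap X_U=A$, which by Remark \ref{rk1} is exactly the one-vertex extension $X_{U\cup\{v\}}=X_U\cup vA$), proves Rado $\Leftrightarrow$ ample by the same universality-plus-homogeneity argument you give for your extension lemma, runs the same back-and-forth for uniqueness, and constructs an ample complex by the same layered induction. The only cosmetic difference is that the paper adds, at each stage, one new cone vertex for \emph{every} subcomplex of the current finite stage simultaneously, which makes the bookkeeping you worry about disappear since the induced complex on earlier vertices is never altered.
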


The following property is a useful criterion of being a Rado complex: 
\begin{definition}\label{rado}
We shall say that a countable simplicial complex $X$ is ample if for any finite subset $U\subset V(X)$ and for any simplicial subcomplex 
$A\subset X_U$ there exists a vertex $v\in V(X)-U$ such that 
\begin{eqnarray}\label{link}
\lk_X(v)\cap X_U=A.\end{eqnarray}
\end{definition}
\begin{remark}\label{rk1}{\rm 
Condition (\ref{link}) can equivalently be expressed as 
\begin{eqnarray}\label{cone}
X_{U'} = X_U\cup (vA),
\end{eqnarray}
where $U'=U\cup \{v\}$
and $vA$ denotes the cone with apex $v$ and base $A$. In literature the cone $vA$ is also sometimes denoted 
$v\ast A$, the simplicial join of a vertex $v$ and complex $A$. 
}
\end{remark}
\begin{remark}\label{rm2} {\rm 
Suppose that $X$ is a simplicial complex with countable set of vertexes $V(X)$. One may naturally consider exhaustions 
$U_0\subset U_1\subset U_2\subset \dots\subset V(X)$ consisting of finite subsets $U_n$ satisfying $\cup U_n = V(X)$. In order to check  that $X$ is ample as defined in Definition \ref{rado} it is sufficient to verify that for any $n\ge 0$ and for any subcomplex $A\subset X_{U_n}$ there exists
a vertex $v\in V(X)-U_n$ satisfying $\lk_X(v)\cap X_{U_n}= A$. 
}
\end{remark}

\begin{remark} {\rm Suppose that $X$ is an ample simplicial complex.
Given finitely many distinct vertexes 
$u_1, \dots, u_m, v_1, \dots, v_n\in V(X)$, there exists a vertex $z\in V(X)$ which is adjacent to $u_1, \dots, u_m$ and nonadjacent to $v_1, \dots, v_n$. To see this we apply Definition \ref{rado} with $U= \{u_1, \dots, u_m, v_1, \dots, v_n\}$ and $A=\{u_1, \dots, u_m\}$. 
This shows that the 1-skeleton of a Rado complex satisfies the defining property of the Rado graph \cite{C}. This also shows that ampleness is a high dimensional generalizaton of this graph property.
}
\end{remark}

The following property of ample complexes will be useful in the sequel. 

\begin{lemma}\label{lm16} Let $X$ be an ample complex and let $L'\subset L$ be a pair consisting of a finite simplical complex $L$ and an  induced subcomplex $L'$. Let $f': L' \to X_{U'}$ be an isomorphism of simplicial complexes, where $U'\subset V(X)$ is a finite subset. Then there exists a finite subset $U\subset V(X)$ containing $U'$ and an isomorphism $f: L \to X_{U}$ with $f|L' =f'$. 
\end{lemma}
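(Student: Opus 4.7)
The plan is to prove the lemma by induction on $n = |V(L) \setminus V(L')|$, adding one vertex at a time and using ampleness to choose its image.

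For the base case $n=0$ we have $L = L'$, so we set $U = U'$ and $f = f'$. For the inductive step, suppose the result holds for all pairs $L' \subset L$ with $|V(L) \setminus V(L')| < n$, and take a pair with $|V(L)\setminus V(L')|=n \ge 1$. Fix a vertex $w \in V(L) \setminus V(L')$ and let $L'' = L_{V(L')\cup\{w\}}$ be the induced subcomplex on $V(L')\cup\{w\}$. Since $L'$ is induced in $L$, it is also induced in $L''$, and $L''$ contains exactly one more vertex than $L'$. If I can extend $f'$ to an isomorphism $f'': L'' \to X_{U''}$ for some finite $U''\supset U'$, then since $L'' \subset L$ is induced and $|V(L)\setminus V(L'')| = n-1$, the inductive hypothesis applied to the pair $L'' \subset L$ and the isomorphism $f''$ completes the argument.

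So the entire task reduces to the single-vertex case, which is essentially just an application of ampleness. Define
\[
A = \lk_L(w) \cap L' \subset L',
\]
i.e.\ the induced subcomplex of $L'$ consisting of those simplexes $\sigma$ with $w\sigma \in F(L)$. Then $A$ is a simplicial subcomplex of $L'$, and because $L'$ is induced in $L''$ we have the decomposition $L'' = L' \cup (wA)$. Transport $A$ across $f'$ to get $A' = f'(A) \subset X_{U'}$. By ampleness of $X$ applied to $U'$ and $A'$, there exists a vertex $v \in V(X) \setminus U'$ with
\[
\lk_X(v) \cap X_{U'} = A'.
\]
By Remark \ref{rk1}, this is equivalent to $X_{U'\cup\{v\}} = X_{U'} \cup (vA')$. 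Now set $U'' = U' \cup \{v\}$ and define $f'': L'' \to X_{U''}$ by $f''|L' = f'$ and $f''(w) = v$. The two sides are $L' \cup (wA)$ and $X_{U'} \cup (vA')$; since $f'$ maps $L'$ isomorphically onto $X_{U'}$ and $A$ onto $A'$, sending $w$ to $v$ yields an isomorphism of the cones $wA \to vA'$, hence an isomorphism $L'' \to X_{U''}$ extending $f'$.

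The argument is essentially bookkeeping: the only nontrivial ingredient is ampleness itself (via Remark \ref{rk1}), and the main thing to be careful about is ensuring that $A$ is genuinely an induced subcomplex of $L'$ (which follows automatically from $L'$ being induced in $L$) so that ampleness applies to yield exactly the piece needed to reconstruct $L''$ inside $X$. No step should present a real obstacle.
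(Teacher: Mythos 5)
Your proof is correct and follows essentially the same route as the paper: reduce to the case of a single extra vertex and apply ampleness to the subcomplex $f'(A)$, where $A=\lk_L(w)\cap L'$ is the base of the cone attached at the new vertex; you merely make the induction explicit, which the paper leaves implicit in the phrase ``it is enough to prove this under the assumption $|V(L)-V(L')|=1$''. One minor remark: your closing concern about $A$ being an \emph{induced} subcomplex of $L'$ is unnecessary --- Definition \ref{rado} permits an arbitrary subcomplex $A\subset X_U$, and in fact $\lk_L(w)\cap L'$ need not be induced in $L'$ --- but this does not affect the validity of the argument.
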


\begin{proof}
It is enough to prove this statement under an additional assumption that $L$ has a single extra vertex, i.e. $V(L) - V(L') =1$. In this case 
$L$ is obtained from $L'$ by attaching a cone $wA$ where $w\in V(L) - V(L')$ denotes the new vertex and $A\subset L'$ is a subcomplex (the base of the cone). Applying the defining property of the ample complex to the subset $U'\subset V(X)$ and the subcomplex $f'(A)\subset X_{U'}$
we find a vertex $v\in V(X)-U'$ such that $\lk_X(v)\cap X_{U'} =f(A)$. We can set $U=U'\cup \{v\}$ and extend $f'$ to the isomorphism 
$f: L\to X_{U}$ by setting $f(w)=v$.
\end{proof}

\begin{theorem} \label{thm1} A simplicial complex is Rado if and only if it is ample. 
\end{theorem}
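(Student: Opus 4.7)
The plan is to prove both directions as follows. The direction ``ample implies Rado'' is a standard back-and-forth argument fuelled by Lemma \ref{lm16}, while the direction ``Rado implies ample'' exploits universality to realise the cone construction $X_U\cup(wA)$ inside $X$ and then uses homogeneity to fix $X_U$ pointwise.

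For the forward direction, suppose $X$ is ample. To verify universality, fix any countable complex $Y$ with an enumeration $V(Y)=\{y_1,y_2,\dots\}$ and set $Y_n:=Y_{\{y_1,\dots,y_n\}}$. Starting from the empty isomorphism, I would inductively invoke Lemma \ref{lm16} with $L'=Y_n$ and $L=Y_{n+1}$ (which contains $L'$ as an induced subcomplex) to extend the embedding by one vertex; the union of these embeddings realises $Y$ as an induced subcomplex of $X$. For homogeneity, given an isomorphism $f:X_U\to X_{U'}$ between finite induced subcomplexes, I would run the classical back-and-forth: fix an enumeration $V(X)=\{v_1,v_2,\dots\}$, and alternately extend the current finite partial isomorphism $f_n:X_{U_n}\to X_{U_n'}$ so as to place $v_n$ in its domain (apply Lemma \ref{lm16} to $L'=X_{U_n}$, $L=X_{U_n\cup\{v_n\}}$, and $f'=f_n$) and then in its range (apply the lemma to $f_n^{-1}$). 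The union $F=\bigcup f_n$ is the desired automorphism of $X$ extending $f$.

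For the converse, suppose $X$ is Rado and let $U\subset V(X)$ be finite with a subcomplex $A\subset X_U$. Pick a new vertex $w\notin V(X)$ and form the finite complex $L:=X_U\cup(wA)$, in which $X_U$ sits as an induced subcomplex. By universality there is an induced subcomplex $X_{U''}\subset X$ and an isomorphism $\phi:L\to X_{U''}$; write $\tilde U=\phi(U)$ and $w'=\phi(w)\notin\tilde U$. Then $\phi|_{X_U}:X_U\to X_{\tilde U}$ is an isomorphism of induced subcomplexes, so by homogeneity its inverse extends to an automorphism $F:X\to X$ with $F|_{X_{\tilde U}}=(\phi|_{X_U})^{-1}$. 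Set $v:=F(w')$; since $F(\tilde U)=U$ and $w'\notin\tilde U$, we have $v\notin U$. Because $X_{U''}$ is induced and $\lk_L(w)=A$, one checks $\lk_X(w')\cap X_{\tilde U}=\phi(A)$, whence
\[
\lk_X(v)\cap X_U \;=\; F\bigl(\lk_X(w')\cap X_{\tilde U}\bigr) \;=\; F(\phi(A)) \;=\; A,
\]
where the last equality uses $F\circ\phi|_{X_U}=\mathrm{id}_{X_U}$.

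The only real subtlety, and thus the main point to be careful with, is the bookkeeping in the Rado $\Rightarrow$ ample direction: I must ensure that the homogeneity-produced automorphism $F$ actually sends $w'$ outside $U$ and that the intersection $\lk_X(w')\cap X_{\tilde U}$ coincides with $\phi(A)$ rather than something larger in $X$. Both follow from the fact that $X_{U''}$ is an \emph{induced} subcomplex, which is why the cone construction $L=X_U\cup(wA)$ must be carried out with $X_U$ induced in $L$. Everything else is routine Fraïssé-style back-and-forth.
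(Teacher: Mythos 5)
Your proposal is correct and follows essentially the same route as the paper: ample $\Rightarrow$ Rado via Lemma \ref{lm16} and back-and-forth (the paper packages the homogeneity step as Lemma \ref{lmis} for two ample complexes, which you simply specialise to $X=X'$), and Rado $\Rightarrow$ ample by embedding $L=X_U\cup(wA)$ via universality and transporting the apex back with a homogeneity automorphism (you apply homogeneity to $(\phi|_{X_U})^{-1}$ and take $v=F(w')$, where the paper applies it to $\phi|_{X_U}$ and takes $v=F^{-1}(w_1)$ — the same argument). Your care about $X_{U''}$ being induced is exactly the right point and is implicit in the paper's version as well.
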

\begin{proof} Suppose $X$ is a Rado complex, i.e. $X$ is universal and homogeneous. Let $U\subset V(X)$ be a finite subset and let $A\subset X_U$ be a subcomplex of the induced complex. Consider an abstract simplicial complex $L=X_U\cup wA$ which obtained from $X_U$ by adding a cone $wA$ with vertex $w$ and base $A$ where $X_U\cap wA= A$. Clearly, $V(L) = U\cup\{w\}$. By universality, we may find a subset $U'\subset V(X)$ and an isomorphism $g:L \to X_{U'}$. Denoting $w_1=g(w)$, $A_1=g(A)$ and $U_1=g(U)$ we have 
$X_{U'} = X_{U_1} \cup w_1 A_1.$
Obviously, $g$ restricts to an isomorphism $g| X_U\, :\,  X_U \to X_{U_1}$. 
By the homogeneity property we can find an isomorphism $F:X\to X$ with $F|X_U =g|X_U$. Denoting $v=F^{-1}(w_1)$ we shall have 
$X_{U\cup \{v\}} = X_U \cup vA$ as required, see Remark \ref{rk1}. 

Now suppose that $X$ is ample. To show that it is universal consider a simplicial complex $L$ with at most countable set of vertexes $V(L)$. 
We may find a chain of induced subcomplexes $L_1\subset L_2\subset \dots$ with $\cup L_n=L$ and each complex $L_{n}$ has exactly $n$ vertexes. Then $L_{n+1}$ obtained from $L_n$ by adding a cone $v_{n+1}A_n$ where $v_{n+1}$ is the new vertex and $A_n\subset L_n$ is a simplicial subcomplex. We argue by induction that we can find a chain of subsets $U_1\subset U_2\subset \dots\subset V(X)$ 
and isomorphisms $f_n: L_n \to X_{U_n}$ satisfying $f_{n+1}|L_n = f_n$. If $U_n$ and $f_n$ are already found then the next set $U_{n+1}$ and the isomorphism $f_{n+1}$ exist because $X$ is ample: we apply Definition \ref{rado} with $U=U_n$ and $A=f_n(A_n)$ and we set $U_{n+1} = U_n \cup \{v\}$ where $v$ is the vertex given by Definition \ref{rado}. The sequence of maps $f_n$ defines an injective map $f: V(L)\to V(X)$ and produces an isomorphism between $L$ and the induced subcomplex $X_{f(V(L))}$. 

The fact that any ample complex is homogeneous follows from Lemma \ref{lmis} below. We state it in a slightly more general form so that it also implies the uniqueness of Rado complexes. 
\end{proof}

\begin{lemma}\label{lmis}
Let $X$ and $X'$ be two ample complexes and let $L\subset X$ and $L'\subset X'$ be two induced finite subcomplexes. Then any isomorphism $f: L\to L'$ can be extended to an isomorphism $F: X\to X'$. 
\end{lemma}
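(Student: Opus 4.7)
The plan is a standard back-and-forth argument, using ampleness of both $X$ and $X'$ as the extension lemma at each step.

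First, enumerate $V(X) = \{x_1, x_2, \dots\}$ and $V(X') = \{y_1, y_2, \dots\}$. I will construct inductively a chain of finite induced subcomplexes
$$L = L_0 \subset L_1 \subset L_2 \subset \dots \subset X, \qquad L' = L_0' \subset L_1' \subset L_2' \subset \dots \subset X',$$
together with isomorphisms $f_n : L_n \to L_n'$ satisfying $f_{n+1}|L_n = f_n$ and $f_0 = f$, with the additional book-keeping that for every $k$, the vertex $x_k$ lies in some $L_n$ and $y_k$ lies in some $L_n'$. Taking unions then gives a bijection $F : V(X) \to V(X')$ whose restriction to each $L_n$ is an isomorphism onto $L_n'$; since every simplex of $X$ is a simplex of some $L_n$ (each being induced), $F$ is the required isomorphism extending $f$.

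The inductive step alternates. At an even stage I go forward: let $x$ be the vertex of least index in $V(X)$ not yet in $L_n$; set $U = V(L_n)$, and let $A = \lk_X(x) \cap L_n$, a subcomplex of $X_U = L_n$. By Remark \ref{rk1}, $X_{U \cup \{x\}} = L_n \cup (xA)$. Apply ampleness of $X'$ to the finite set $U' = V(L_n')$ and to the subcomplex $f_n(A) \subset L_n' = X'_{U'}$: this yields a vertex $y \in V(X') - U'$ with $\lk_{X'}(y) \cap L_n' = f_n(A)$, so $X'_{U' \cup \{y\}} = L_n' \cup (y f_n(A))$. Define $L_{n+1} = X_{U \cup \{x\}}$, $L_{n+1}' = X'_{U' \cup \{y\}}$, and extend $f_n$ by $f_{n+1}(x) = y$; the cone description on each side guarantees that the resulting bijection of vertices is indeed an isomorphism of induced subcomplexes. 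At an odd stage I go backward: pick the least-index $y' \in V(X')$ not yet in $L_n'$ and play the symmetric game, using ampleness of $X$ to produce a preimage vertex. Alternating forward and backward steps ensures that every $x_k$ and every $y_k$ eventually appears.

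The main potential obstacle is verifying that each extension remains an isomorphism of induced subcomplexes rather than merely of abstract simplicial complexes, but this is handled cleanly by Remark \ref{rk1}: ampleness is designed to produce exactly a cone attachment that realises the target induced subcomplex, so the compatibility $f_{n+1}(\lk_X(x) \cap L_n) = \lk_{X'}(y) \cap L_n'$ transforms the inductive isomorphism hypothesis into the next one automatically. The countability of $V(X)$ and $V(X')$ — which follows from the fact that ample (hence Rado) complexes have countable vertex sets by definition — is what lets the back-and-forth exhaust both sides and thereby produce a global isomorphism.
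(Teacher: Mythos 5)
Your proof is correct and follows essentially the same back-and-forth argument as the paper; the only cosmetic difference is that you inline the one-vertex extension step via ampleness and Remark \ref{rk1}, whereas the paper packages that step as Lemma \ref{lm16} and invokes it at each stage. Nothing further is needed.
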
 
\begin{proof}
We shall construct chains of subsets of the sets of vertexes $U_0\subset U_1\subset \dots\subset V(X)$ and $U'_0\subset U'_1\subset \dots\subset V(X')$ such that $\cup U_n=V(X)$,  $\cup U'_n=V(X')$, $X_{U_0}=L$, $X_{U'_0}=L'$, 
and $|U_{n+1}-U_n|=1$, $|U'_{n+1}-U'_n|=1$. We shall also construct isomorphisms $f_n: X_{U_n} \to X_{U'_n}$ satisfying $f_0=f$ and $f_{n+1}|X_{U_n} =f_n$. The whole collection $\{f_n\}$ will then define a required isomorphism $F: X\to X'$ with $F|L=f$. 

To constructs these objects we shall use the well known back-and-forth procedure. Enumerate vertices $V(X)-V(L) =\{v_1, v_2, \dots\}$ and 
$V(X')-V(L') =\{v'_1, v'_2, \dots\}$ and start by setting $U_0=V(L)$, $U'_0=L'$ and $f_0=f$. We act by induction and describe $U_n$, $U'_n$ and $f_n$ assuming that the objects $U_i$, $U'_i$ and $f_i:U_i\to U'_i$ have been already defined for all $i<n$. 

The procedure will depend on the parity of $n$. For $n$ odd we find the smallest $j$ with $v_j\notin U_{n-1}$ and set $U_n=U_{n-1}\cup \{v_j\}$. 
Applying Lemma \ref{lm16} to the simplicial complexes $L=X_{U_n}$, $L'=X_{U_{n-1}}$ and the isomorphism $f_{n-1}: X_{U_{n-1}}\to X'_{U'_{n-1}}$ we obtain a subset $U'_n\subset V(X')$ containing $U'_{n-1}$ and an isomorphism $f_n: X_{U_n}\to X'_{U'_n}$ extending $f_{n-1}.$

For $n$ even we proceed in the reverse direction. We find the smallest $j$ with $v'_j\notin U'_{n-1}$ and set $U'_n=U'_{n-1}\cup \{v'_j\}$. 
Next we applying Lemma \ref{lm16} to the simplicial complexes $L=X'_{U'_n}$, $L'=X'_{U'_{n-1}}$ and the isomorphism $f_{n-1}^{-1}: X'_{U'_{n-1}}\to X_{U_{n-1}}$. We obtain a subset $U_n\subset V(X)$ containing $U_{n-1}$ and an isomorphism $f_n^{-1}: X'_{U'_n}\to X_{U_n}$ extending $f_{n-1}^{-1}.$
\end{proof}

\begin{corollary}\label{unique}
Any two Rado complexes are isomorphic. 
\end{corollary}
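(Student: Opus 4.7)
The plan is to derive this immediately from the preceding two results. By Theorem \ref{thm1}, a simplicial complex is Rado if and only if it is ample, so if $X$ and $X'$ are two Rado complexes, they are both ample. Lemma \ref{lmis} then asserts that any isomorphism between finite induced subcomplexes of $X$ and $X'$ extends to a global isomorphism $F \colon X \to X'$, so I just need to produce some finite induced subcomplexes and an isomorphism between them to feed into the lemma.

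The simplest choice is to take $L = \emptyset \subset X$ and $L' = \emptyset \subset X'$, with $f \colon L \to L'$ being the empty map. The empty subcomplex is trivially an induced finite subcomplex (its vertex set is the empty subset of $V(X)$, and no simplex of $X$ has all vertices in $\emptyset$), and the empty map is vacuously an isomorphism. Applying Lemma \ref{lmis} to this data directly yields an isomorphism $F \colon X \to X'$, which is what we want.

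If one prefers to avoid arguing about the empty subcomplex, one can instead pick any vertex $v \in V(X)$ and any vertex $v' \in V(X')$, let $L = X_{\{v\}}$ and $L' = X'_{\{v'\}}$ (each a single vertex), and define $f(v) = v'$; this is manifestly an isomorphism of the two one-vertex induced subcomplexes, and Lemma \ref{lmis} again produces the desired extension $F \colon X \to X'$.

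There is no real obstacle here, since all the work has already been done in Lemma \ref{lmis} via the back-and-forth construction; the corollary is essentially a specialization. The only minor point to check is that Lemma \ref{lmis} is stated for arbitrary pairs of ample complexes (not just $X = X'$), which it is, so the extension argument applies verbatim.
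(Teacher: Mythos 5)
Your proposal is correct and matches the paper's own argument exactly: the paper also deduces the corollary from Theorem \ref{thm1} together with Lemma \ref{lmis} applied to $L=L'=\emptyset$. Your alternative using one-vertex induced subcomplexes is a harmless variant of the same idea.
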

\begin{proof}
This follows from Theorem \ref{thm1} with subsequent applying Lemma \ref{lmis} with $L=L'=\emptyset$. 
\end{proof}

\begin{remark}{\rm 
In Definition \ref{def1} we defined universality with respect to arbitrary countable simplicial subcomplexes. 
A potentially more restrictive definition 
dealing only with finite subcomplexes together with homogeneity is in fact equivalent to Definition \ref{def1}; this follows from the arguments used in the proof of Theorem \ref{thm1}. 
}
\end{remark}
\begin{remark}{\rm There is a variation of the whole study presented in this paper when one is interested in simplicial complexes containing no induced subcomplexes isomorphic to 
$\partial \Delta_d$, for a fixed $d>0$. Note that $\partial \Delta_d$ is the simplicial complex with the vertex set $\{1, 2, \dots, d\}$ and with all non-empty proper subsets of $\{1, 2, \dots, d\}$ as simplexes. Simplicial complexes with this property 
can also be characterised as having no external $d$-dimensional simplexes. 

Definition \ref{rado} can be modified for this case: A simplicial complex $X$ is $d$-{\it ample} if for any finite subset $U\subset V(X)$ and for any simplicial subcomplex $A\subset X_U$ satisfying $E_{d-1}(A|X_U)=\emptyset$ there exists a vertex $v\in V(X)-U$ such that $\lk_X(v)\cap X_U = A$. Here the symbol $E_{d-1}(A|X_U)$ stands for the set of all $(d-1)$-dimensional simplexes $\tau\in X_U$ with $\partial \tau\subset A$ but $\tau\notin A$. 

It can be shown (similarly to the above) that $d$-ample simplicial complexes exist and are universal and homogeneous with respect to the class of complexes with no induced $\partial \Delta_d$. 

Moreover Theorem \ref{thm26} can also be generalised: random infinite simplicial complexes are universal  with probability one for this specific class (see \S 6 and \S 7)
when the probability parameters $p_\sigma$ satisfy 
$p_\sigma=1$ for all simplexes $\sigma$ with $\dim \sigma =d$ and $0<p_-\le p_\sigma\le p_+<1$ for $\dim \sigma \not= d$. 

}
\end{remark}

\section{Deterministic constructions of Rado complexes}

\subsection{An inductive construction} One may construct a Rado simplicial complex $X$ inductively as the union of a chain of finite induced simplicial subcomplexes $$X_0\subset X_1\subset X_2\subset \dots, \quad \cup_{n\ge 0}X_n \, =\,  X.$$ 
Here $X_0$ is a single point and each complex $X_{n+1}$ is obtained from 
$X_n$ by first adding a finite set of vertices $v(A)$, labeled by subcomplexes 
$A\subset X_n$ (including the case when $A=\emptyset$); secondly, we construct the cone $v(A) \ast A$ with apex $v(A)$ and 
base $A$, and thirdly we attach each such cone $v(A)\ast A$ to $X_n$ along the base $A\subset X_n$. 
%
%
%
%
Thus, \begin{eqnarray}
X_{n+1} = X_n \, \cup\,  \bigcup_{A} (v(A)\ast A). 
\end{eqnarray}

To show that the complex $X=\cup_{n\ge 0} X_n$ is ample, i.e. a Rado complex, we refer to Remark \ref{rm2} and observe that 
any 
subcomplex $A\subset X_n$ the vertex $v=v(A)\in V(X_{n+1})$ 
satisfies $\lk_X(v)\cap X_n =A.$

\subsection{An explicit construction} Here we shall give an explicit construction of a Rado complex $X$. To describe it we shall use the sequence $\{p_1, p_2, \dots, \}$ of all primes in increasing order, where $p_1=2$, $p_2=3$, etc. 

The set of vertexes $V(X)$ is the set of all positive integers $\N$. Each simplex of $X$ is uniquely represented by an increasing sequence $a_0<a_1< \dots <a_k$ with certain properties. Subsequences of $a_0<a_1< \dots <a_k$ are obtained by erasing one or more elements in the sequence. 
%
%

\begin{definition} 
(1) A sequence $a_0<a_1$ is a 1-dimensional simplex of $X$ if and only if $p_{a_0}$-th binary digit of $a_1$ is 1. 
(2) We shall say that an increasing sequence of positive integers $0<a_0<a_1< \dots< a_k$ represents a simplex of $X$ if 
all its proper subsequences are in $X$ and additionally the $p_{a_0}p_{a_1} \dots p_{a_{k-1}}$-th binary digit of $a_k$ is 1. 
\end{definition}
\begin{prop}
The obtained simplicial complex $X$ is Rado. 
\end{prop}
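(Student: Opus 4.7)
The plan is to appeal to Theorem~\ref{thm1} and verify directly that $X$ is ample. Fix a finite subset $U\subset \N$ and a subcomplex $A\subset X_U$; the goal is to produce a vertex $v\in \N\setminus U$ with $\lk_X(v)\cap X_U=A$.

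The key observation is number-theoretic. For each non-empty $\sigma=\{a_0<a_1<\dots<a_k\}\subseteq U$, set
\[
N_\sigma \,:=\, p_{a_0}p_{a_1}\cdots p_{a_k}.
\]
These are squarefree products of distinct primes, so unique factorisation makes the map $\sigma\mapsto N_\sigma$ injective on non-empty subsets of $U$; hence the binary digits of $v$ at the finitely many positions $\{N_\sigma : \emptyset\ne \sigma\subseteq U\}$ can be prescribed independently. I would then define $v$ by setting its $N_\sigma$-th binary digit to $1$ when $\sigma\in A$ and to $0$ when $\sigma$ is a simplex of $X_U$ with $\sigma\notin A$ (the remaining digits are irrelevant), and forcing one sufficiently high-order bit so that $v>\max U$. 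Then $v\notin U$, and $v$ appears as the last entry in every increasing sequence $\sigma\cup\{v\}$ with $\sigma\subseteq U$.

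To verify $\lk_X(v)\cap X_U=A$, I would prove by induction on $|\sigma|$ that, for every non-empty $\sigma\subseteq U$,
\[
\sigma\cup\{v\}\in F(X)\ \Longleftrightarrow\ \sigma\in A.
\]
The case $|\sigma|=1$ is immediate from the $1$-dimensional rule and the construction of $v$. For $|\sigma|\ge 2$, if $\sigma$ is not a simplex of $X_U$ then both sides are false, so assume $\sigma\in F(X_U)$. By the defining condition of $X$, $\sigma\cup\{v\}$ is a simplex precisely when (a) $\tau\cup\{v\}\in F(X)$ for every $\emptyset\ne\tau\subsetneq\sigma$, and (b) the $N_\sigma$-th binary digit of $v$ equals $1$. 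The inductive hypothesis rewrites (a) as ``$\tau\in A$ for every such $\tau$'', while (b) is, by construction, equivalent to $\sigma\in A$. Thus $\sigma\in A$ yields both (a) (since $A$ is a subcomplex) and (b); conversely, (b) alone already forces $\sigma\in A$. This closes the induction.

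The main subtlety, and the only step requiring care, is the apparent asymmetry in the backward direction: condition (a) by itself is too weak to force $\sigma\in A$, because a subcomplex need not contain a simplex all of whose proper faces it contains. The missing information is supplied precisely by (b), which is hard-coded into the definition of $v$ via the injectivity of $\sigma\mapsto N_\sigma$. Everything else is a direct translation between the defining digit conditions of $X$ and the prescribed binary expansion of $v$.
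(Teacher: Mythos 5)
Your proof is correct and takes essentially the same route as the paper: the paper's explicit choice $v=\sum_{\sigma\in F(A)}2^{N_\sigma}+2^{K_U}$ with $K_U=1+\prod_{w\in U}p_w$ is exactly your digit prescription (using injectivity of $\sigma\mapsto N_\sigma$ via unique factorisation) together with the high-order bit forcing $v>\max U$. The only difference is presentational: you spell out the induction on $|\sigma|$ showing $\sigma\cup\{v\}\in F(X)\Leftrightarrow\sigma\in A$, which the paper compresses into the single sentence beginning ``By definition, the simplex $v\sigma$ \dots lies in $X$ if and only if the $N_\sigma$-th binary digit of $v$ is 1.''
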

\begin{proof}

With any increasing sequence $\sigma$ of positive integers $0<a_0<a_1< \dots< a_k$ we associate the product 
$$N_\sigma = p_{a_0}p_{a_1}\dots p_{a_k},$$
which is an integer without multiple prime factors. Note that for two such increasing sequences $\sigma$ and $\sigma'$ one has $N_\sigma=N_{\sigma'}$ if and only if $\sigma$ is identical to $\sigma'$. 

Given a finite subset $U\subset V(X)$ and a simplicial subcomplex $A\subset X_U$, consider the vertex 
\begin{eqnarray}\label{nsigma}
v = \sum_{\sigma\in F(A)} 2^{N_\sigma} + 2^{K_U}\, \in\,  V(X)
\end{eqnarray}
where $$K_U = 1 + \prod_{w\in U} p_w.$$
The binary expansion of $v$ has ones exactly on positions $N_\sigma$ where $\sigma\in F(A)$ and it has zeros on all other positions except an additional 1 at position $K_U$. Note that $K_U>N_\sigma$ for any simplex $\sigma\subset X_U$. In particular, we see that vertex $v$ defined by (\ref{nsigma}) satisfies $v> w$ for any $w\in U$. 

Consider a simplex $\sigma\subset X_U$. By definition, the simplex $v\sigma$ with apex $v$ and base $\sigma$ lies in $X$ if and only if the 
$N_\sigma$-th binary digit of $v$ is 1. We see from (\ref{nsigma}) that it happens if and only if $\sigma\subset A$. This means that 
$\lk_X(v)\cap X_U=A$ and hence the complex $X$ is a Rado complex. 
\end{proof}
\section{Some properties of the Rado complex}

\begin{lemma}\label{lm31}
Let $X$ be a Rado complex, let $U\subset V(X)$ be a finite set and let $A\subset X_U$ be a subcomplex. 
Let $Z_{U,A}\subset V(X)$ denote the set of vertexes $v\in V(X)-U$ satisfying (\ref{link}). Then $Z_{U,A}$ is infinite and the induced complex on $Z_{U,A}$ is also a Rado complex. 
\end{lemma}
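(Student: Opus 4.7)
The plan is to use Theorem \ref{thm1}, which allows me to substitute ``ample'' for ``Rado'' throughout, and to deduce both claims by a single, carefully chosen application of the ampleness of $X$.

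For the infiniteness of $Z_{U,A}$, I would iterate the defining property. Given any finite subset $W \subset Z_{U,A}$, I apply ampleness of $X$ to the finite set $U \cup W$ together with the subcomplex $A$ itself (viewed inside $X_{U\cup W}$ via $A\subset X_U\subset X_{U\cup W}$). This produces a vertex $v\in V(X)-(U\cup W)$ with $\lk_X(v)\cap X_{U\cup W}=A$; intersecting with $X_U$ gives $\lk_X(v)\cap X_U=A$, so $v\in Z_{U,A}-W$. Since $W$ was an arbitrary finite subset of $Z_{U,A}$, the set $Z_{U,A}$ cannot be finite.

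For the second claim, I would verify that the induced complex $X_{Z_{U,A}}$ is ample, which suffices by Theorem \ref{thm1}. Fix a finite $W\subset Z_{U,A}$ and a subcomplex $B\subset X_W$. Since $Z_{U,A}\subset V(X)-U$, we have $U\cap W=\emptyset$, so no simplex of $A$ shares a vertex with a simplex of $B$. Consequently $A^*:=A\cup B$ is a well-defined subcomplex of $X_{U\cup W}$ satisfying
\[
A^*\cap X_U \,=\, A, \qquad A^*\cap X_W \,=\, B.
\]
Applying ampleness of $X$ to $U\cup W$ and $A^*$ produces a vertex $v\in V(X)-(U\cup W)$ with $\lk_X(v)\cap X_{U\cup W}=A^*$. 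Restricting to $X_U$ shows $v\in Z_{U,A}-W$, and restricting to $X_W$ shows $\lk_X(v)\cap X_W=B$. Because $X_{Z_{U,A}}$ is an induced subcomplex, the link of $v$ taken in $X_{Z_{U,A}}$ agrees on $X_W$ with $\lk_X(v)\cap X_W$, giving the required ampleness condition for $X_{Z_{U,A}}$.

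The only thing to check carefully is that $A^*=A\cup B$ is indeed a subcomplex of $X_{U\cup W}$: closure under faces is immediate from closure of $A$ and $B$, and the disjointness $U\cap W=\emptyset$ prevents any ``interference'' between the two parts. I do not anticipate any serious obstacle; the entire argument reduces to packaging both requirements into one subcomplex $A^*$ and then calling on the ampleness of $X$ a single time.
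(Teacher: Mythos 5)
Your proof is correct and follows essentially the same route as the paper: both parts apply ampleness of $X$ to the enlarged vertex set $U\cup W$, using the subcomplex $A$ itself for infiniteness and the disjoint union $A\cup B$ (the paper's $A\sqcup A'$) for the ampleness of $X_{Z_{U,A}}$, then restrict links to $X_U$ and $X_W$ exactly as in the paper's argument.
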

\begin{proof} Consider a finite set $\{v_1, \dots, v_N\}\subset Z_{U, A}$ of such vertexes. 
One may apply Definition \ref{rado} to the set 
$U_1=U\cup \{v_1, \dots, v_N\}$ and to the subcomplex $A\subset X_{U_1}$ to find another vertex $v_{N+1}$ satisfying the condition of Definition \ref{rado}. This shows that $Z_{U,A}$ must be infinite. 

Let $Y\subset X$ denote the subcomplex induced by $Z_{U, A}$. 
Consider a finite subset $U'\subset Z_{U,A}=V(Y)$ and a subcomplex $A'\subset X_{U'}=Y_{U'}$. Applying the condition of Definition \ref{rado} 
to the set $W= U\cup U'\subset V(X)$ and to the subcomplex $A\sqcup A'$ we find a vertex $z\in V(X)-W$ such that 
\begin{eqnarray}\label{links2}
\lk_X(z) \cap X_W = A\cup A'.\end{eqnarray} 
Since $X_{W} \supset X_U\cup X_{U'}$, the equation (\ref{links2}) implies $\lk_X(z)\cap X_U=A$, i.e. $z\in Z_{U, A}$. 
Intersection both sides of (\ref{links2}) with $X_{U'}=Y_{U'}$ and using $\lk_Y(z) =\lk_X(z)\cap Y$ (since $Y$ is an induced subcomplex) we obtain
$$\lk_Y(z) \cap Y_{U'}= A'$$
implying that $Y$ is Rado. 
\end{proof}

\begin{corollary}
Let $X$ be a Rado complex and let $Y$ be obtained from from $X$ by selecting a finite number of simplexes $F\subset F(X)$ 
and deleting all simplexes $\sigma\in F(X)$ which contain simplexes from $F$ as their faces. Then $Y$ is also a Rado complex. 
\end{corollary}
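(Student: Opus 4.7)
The plan is to verify directly that $Y$ satisfies the ampleness criterion of Definition~\ref{rado}; combined with Theorem~\ref{thm1}, this will show that $Y$ is a Rado complex. Let $W\subset V(X)$ be the (finite) set of vertices appearing in some simplex of $F$, so that every $\tau\in F$ lies in $X_W$. The vertex set $V(Y)$ agrees with $V(X)$ except possibly for the vertices $v$ with $\{v\}\in F$, all of which lie in $W$; in particular $V(Y)$ is still countably infinite.

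Given a finite subset $U'\subset V(Y)$ and a subcomplex $A'\subset Y_{U'}$, I would enlarge the ambient set to $U=U'\cup W$ and view $A'$ as a subcomplex of $X_U$ (legitimate since $A'\subset Y\subset X$ and $V(A')\subset U'\subset U$). Applying the ampleness of $X$ to the pair $(U,A')$ produces a vertex $v\in V(X)-U$ with $\lk_X(v)\cap X_U=A'$. Since $v\notin W$, the singleton $\{v\}$ is not in $F$, so $v\in V(Y)-U'$ is an admissible witness for the ampleness of $Y$. It remains only to check that $\lk_Y(v)\cap Y_{U'}=A'$.

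The key observation for that check is that, for any simplex $\sigma\subset U'$ with $v\notin\sigma$, one has $v\sigma\in Y$ if and only if $v\sigma\in X$ and $\sigma\in Y$: any $\tau\in F$ that is a face of $v\sigma$ satisfies $V(\tau)\subset W$, so $v\notin V(\tau)$, forcing $\tau\subset\sigma$. Combined with the identity $\lk_X(v)\cap X_{U'}=A'$ (obtained by intersecting $\lk_X(v)\cap X_U=A'$ with $X_{U'}\subset X_U$) and with $A'\subset Y_{U'}\subset Y$, this yields $\lk_Y(v)\cap Y_{U'}=A'$. The main technical pitfall — and the step I expect to be the chief obstacle — is that in general, deleting simplexes can both remove elements from $\lk_X(v)$ and shrink $Y_{U'}$; absorbing $W$ into the enlargement $U$ is what neutralises this, because with $v\notin W$ adjoining $v$ to any face $\sigma\subset U'$ cannot create a new $F$-obstruction beyond those already present in $\sigma$.
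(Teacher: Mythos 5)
Your proof is correct and follows essentially the same route as the paper: both arguments produce an ample witness $v$ for the pair in question that avoids the finitely many vertices incident to simplexes of $F$, and then verify that adjoining such a $v$ creates no new $F$-obstructions, so the link condition descends from $X$ to $Y$. The only (cosmetic) difference is how that witness is obtained — the paper picks it from the infinite set $Z_{U,A}$ of Lemma \ref{lm31}, while you absorb the vertex set $W$ of $F$ into the ambient set $U$ before applying ampleness, which is the same enlargement trick used inside the proof of Lemma \ref{lm31}.
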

\begin{proof}
Let $U\subset V(Y)$ be a finite subset and let $A\subset Y_U$ be a subcomplex. We may also view $U$ as a subset of $V(X)$ and then 
$A$ becomes a subcomplex of $X_U$ since $ Y_U\subset X_U$. The set of vertexes $v\in V(X)$ satisfying $\lk_X(v)\cap X_U=A$ is infinite (by Lemma \ref{lm31}) and thus we may find a vertex $v\in V(X)$ which is not incident to simplexes from the family $F$. Then $\lk_Y(v)=\lk_X(v)\cap Y$ and we obtain 
 $\lk_Y(v)\cap Y_U=A$. \end{proof}

\begin{corollary}\label{lm166} Let $X$ be a Rado complex. If the vertex set $V(X)$ is partitioned into a finite number of parts then the induced subcomplex on at least one of these parts is a Rado complex. 
\end{corollary}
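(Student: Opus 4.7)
The plan is to argue by contradiction, using the characterisation of Rado complexes as ample ones (Theorem \ref{thm1}) together with the abundance of witnesses provided by Lemma \ref{lm31}. Write the partition as $V(X) = V_1 \sqcup \dots \sqcup V_k$ and suppose, for contradiction, that for each $i$ the induced subcomplex $X_{V_i}$ fails to be Rado, hence fails to be ample. For each $i$ I would then select a finite subset $U_i \subset V_i$ and a subcomplex $A_i \subset X_{U_i}$ witnessing the failure of ampleness, meaning that no vertex $v \in V_i - U_i$ satisfies $\lk_{X_{V_i}}(v) \cap X_{U_i} = A_i$.

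Next I would amalgamate these local obstructions into a single global one. Put $U = U_1 \cup \dots \cup U_k$ and $A = A_1 \cup \dots \cup A_k$. Because the $U_i$ are pairwise disjoint, every simplex $\sigma$ of $A$ lies in a unique $X_{U_i}$, so $A$ is indeed a subcomplex of $X_U$. Applying Lemma \ref{lm31} to the Rado complex $X$, the set $Z_{U,A}$ of vertices $v \in V(X) - U$ with $\lk_X(v) \cap X_U = A$ is infinite, and by the pigeonhole principle some $v \in Z_{U,A}$ lies in one of the parts, say $v \in V_i$. Since $v \notin U$, also $v \in V_i - U_i$.

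Finally I would translate this global witness into a local one inside $V_i$, producing the contradiction. The only point needing care is the bookkeeping between links taken in $X$ and in $X_{V_i}$: because $v \in V_i$ and $U_i \subset V_i$, any candidate cone $v\tau$ with $\tau \subset U_i$ has all of its vertices in $V_i$, so membership in $X$ and in $X_{V_i}$ coincide, giving $\lk_{X_{V_i}}(v) \cap X_{U_i} = \lk_X(v) \cap X_{U_i}$. On the other side, a simplex of $A$ is contained in $X_{U_i}$ if and only if its vertex set is a subset of $U_i$, and by disjointness of the $U_j$ this happens precisely when the simplex belongs to $A_i$; hence $A \cap X_{U_i} = A_i$. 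Combining, $\lk_{X_{V_i}}(v) \cap X_{U_i} = A_i$, contradicting the choice of $(U_i, A_i)$. The main (and only) obstacle is getting this identification of links and intersections correct; once that is in place the result drops out of Lemma \ref{lm31} immediately.
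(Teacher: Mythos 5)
Your proof is correct and follows essentially the same route as the paper: assume each part fails ampleness, amalgamate the witnessing pairs $(U_i,A_i)$ into $(U,A)$ with $U=\bigsqcup U_i$ and $A=\bigsqcup A_i$, apply ampleness of $X$, and derive a contradiction in whichever part the resulting vertex lands. The only cosmetic differences are that the paper first reduces to two parts while you handle $k$ parts directly, and that you invoke Lemma \ref{lm31} plus pigeonhole where a single witness from Definition \ref{rado} already suffices; your explicit verification that $\lk_{X_{V_i}}(v)\cap X_{U_i}=A_i$ fills in bookkeeping the paper leaves implicit.
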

\begin{proof} It is enough to prove the statement for partitions into two parts. 
Let $V(X)= V_1\sqcup V_2$ be a partition; denote by $X^1$ and $X^2$ the subcomplexes induced by $X$ on $V_1$ and $V_2$ correspondingly. 
Suppose that none of the  subcomplexes $X^{1}$ and $X^{2}$ is Rado. 
Then for each $i=1, 2$ there exists a finite subset $U_i\subset V_i$ and a subcomplex $A_i\subset X^i_{U_i}$ such that no vertex $v\in V_i$ satisfies 
$\lk_{X^i}(v) \cap X^i_{U_i}= A_i$. 
Consider the subset $U= U_1\sqcup U_2\subset V(X)$ and a subcomplex 
$A=A_1\sqcup A_2\subset X_{U}$. Since $X$ is Rado we may find a vertex $v\in V(X)$ with $\lk_X\cap X_U = A. $ Then $v$ lies in $V_1$ or $V_2$ and we obtain a contradiction, since $\lk_{X^i}(v) \cap X^i_{U_i} =A_i.$
\end{proof}

\begin{lemma}
In a Rado complex $X$, the the link of every simplex is a Rado complex. 
\end{lemma}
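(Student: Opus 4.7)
The plan is to apply the ampleness criterion of Theorem \ref{thm1} and verify that, for any simplex $\sigma \subset X$, the link $L = \lk_X(\sigma)$ is ample. Given a finite subset $U \subset V(L)$ and a subcomplex $A \subset L_U$, my task is to produce a vertex $v \in V(L) - U$ with $\lk_L(v) \cap L_U = A$. Since vertices of $L$ are disjoint from $\sigma$ by definition, I would work inside $X$ with the enlarged finite vertex set $U' = U \cup \sigma$.

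The key idea is to pick the right auxiliary subcomplex $B \subset X_{U'}$ to feed into the ampleness of $X$. I would take $B$ to consist of all nonempty sets of the form $\tau \cup \tau'$ with $\tau \in F(A) \cup \{\emptyset\}$ and $\tau' \subseteq \sigma$ --- that is, the simplicial join $A \ast \sigma$, where $\sigma$ is viewed as the full closed simplex on its vertex set. This is a genuine subcomplex of $X_{U'}$ because for every $\tau \in F(A)$ we have $\tau \in F(L)$, so $\tau \cup \sigma \in F(X)$, and every face $\tau \cup \tau'$ of $\tau \cup \sigma$ lies in $F(X)$. Applying ampleness of $X$ to $U'$ and $B$ produces a vertex $v \in V(X) - U'$ with $\lk_X(v) \cap X_{U'} = B$. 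Since $\sigma$ itself lies in $F(B)$, we get $v \sigma \in F(X)$, so $v \in V(L) - U$. For any nonempty $\tau \subseteq U$, the chain of equivalences
\begin{equation*}
\tau \in \lk_L(v) \cap L_U \iff v \tau \sigma \in F(X) \iff \tau \sigma \in F(B) \iff \tau \in F(A)
\end{equation*}
then gives $\lk_L(v) \cap L_U = A$, as required.

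The main obstacle is the choice of $B$; once $B = A \ast \sigma$ is in place the verification is a routine unpacking of definitions. The shape of $B$ is exactly what simultaneously forces the new vertex $v$ returned by ampleness to be adjacent to all of $\sigma$ (so that $v \in V(L)$) and pins down the link relations of $v$ with vertices of $U$ to match those prescribed by $A$.
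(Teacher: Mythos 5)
Your proof is correct and follows the same overall strategy as the paper's: enlarge $U$ to $U'=U\cup V(\sigma)$ and apply ampleness of $X$ to a suitably chosen auxiliary subcomplex $B\subset X_{U'}$. The one substantive difference is the choice of $B$, and there your version is the right one. The paper takes $B=A\sqcup\bar\sigma$, the \emph{disjoint union} of $A$ with the closed simplex $\bar\sigma$; but with that choice the vertex $w$ produced by ampleness satisfies $w\tau\sigma\notin F(X)$ for every nonempty $\tau\in F(A)$, because $\tau\sigma$ is a simplex of $X_{U'}$ (as $\tau\in F(Y_U)$ forces $\tau\sigma\in F(X)$) that does not belong to $A\sqcup\bar\sigma$. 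Hence $w$, while indeed a vertex of $Y=\lk_X(\sigma)$, would have $\lk_Y(w)\cap Y_U=\emptyset$ rather than $A$, and the paper's claimed identity $Y_{U\cup w}=Y_U\cup wA$ fails unless $A=\emptyset$. Your choice $B=A\ast\bar\sigma$ (the join) repairs exactly this defect: it places $\tau\sigma$ in $\lk_X(v)$ precisely when $\tau\in F(A)\cup\{\emptyset\}$, which is what your chain of equivalences needs, and it still contains $\sigma$ so that $v$ lands in $V(Y)-U$. In short, the paper's $\sqcup$ should be read as a slip for the join, and your writeup is the correct version of the intended argument.
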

\begin{proof}
Let $Y=\lk_X(\sigma)$ be the link of a simplex $\sigma\in X$. To show that $Y$ is Rado, let $U\subset V(Y)$ be a subset and let $A\subset Y_U$ be a subcomplex. 
We may apply the defining property of the Rado complex to the subset $U'=U\cup V(\sigma)\subset V(X)$ and to the subcomplex 
$A\sqcup \bar\sigma\subset X_{U'}$; here $\bar \sigma$ denotes the subcomplex containing the simplex $\sigma$ and all its faces.
We obtain a vertex $w\in V(X)-U'$ with $\lk_X(w)\cap X_{U'} = A\sqcup \bar \sigma$ or equivalently, $X_{U'\cup w} = X_{U'} \cup wA$, see Remark \ref{rk1}. Note that $w\in Y=\lk_X(\sigma)$ since the simplex $w\sigma$ is in $X$. Besides, $Y_{U\cup w} =Y_U \cup wA$. Hence we see that the link $Y$ is also a Rado complex. 
\end{proof}
%
%
%
%
%
%
%
%
%

\section{Geometric realisation of the Rado complex}

Recall that for a simplicial complex $X$ {\it the geometric realisation} $|X|$ is the set of all functions $\alpha: V(X)\to [0,1]$ such that 
the support ${\rm {supp}}(\alpha)=\{v; \alpha(v)\not=0\}$ is a simplex of $X$ (and hence finite) and $\sum_{v\in X} \alpha(v)=1$, see \cite{Sp}. For a simplex $\sigma\in F(X)$ the symbol $|\sigma|$ denotes the set of all $\alpha\in |X|$ with ${\rm {supp}}(\alpha)\subset \sigma$. The set $|\sigma|$ has natural topology and is homeomorphic to the linear simplex lying in an Euclidean space. 
{\it The weak topology on} the geometric realisation $|X|$ has as open sets the subsets $U\subset |X|$ such that $U\cap |\sigma|$
is open in $ |\sigma|$ for any simplex $\sigma$.

\begin{lemma}
Let $X$ be a Rado complex. Then there exists a sequence of finite subsets $U_0\subset U_1\subset U_2\subset \dots\subset V(X)$ such that $\cup U_n = V(X)$ and for any $n=0, 1, 2, \dots$ the induced simplicial complex $X_{U_n}$ is isomorphic to a triangulation $L_n$ of the standard simplex $\Delta_{n+1}$ of dimension $n$.  Moreover, for any $n$ the complex $L_n$ is naturally an induced subcomplex of $L_{n+1}$ and the isomorphisms $f_n: X_{U_n} \to L_n$ satisfy $f_{n+1}|X_{U_n} = f_n$. 
\end{lemma}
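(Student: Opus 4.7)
The plan is to construct the triple $(U_n, L_n, f_n)$ by induction on $n$, combined with a one-sided back-and-forth that ensures every vertex of $V(X)$ is eventually absorbed.

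First I fix an enumeration $V(X) = \{w_1, w_2, \ldots\}$ and initialize with $U_0 = \{w_1\}$, $L_0$ a single vertex, and $f_0$ the obvious bijection.

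For the inductive step, suppose $(U_n, L_n, f_n)$ has been constructed with $L_n$ a triangulation of $\Delta_{n+1}$. Let $w_k$ be the smallest-indexed vertex of $V(X)-U_n$, and set $B = f_n(\lk_X(w_k)\cap X_{U_n})$, a subcomplex of $L_n$. I form the abstract extension $M := L_n \cup w'_k B$ with $w'_k$ a fresh symbol; the map $f_n$ then extends to an isomorphism $\tilde f_n : M \to X_{U_n\cup\{w_k\}}$ via $w'_k \mapsto w_k$.

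The crux of the argument is to produce a triangulation $L_{n+1}$ of $\Delta_{n+2}$ that contains $M$ as an induced subcomplex (so in particular contains $L_n$ as an induced subcomplex). Granted such an $L_{n+1}$, Lemma \ref{lm16} applied to the pair $M \subset L_{n+1}$ and the isomorphism $\tilde f_n$ yields $U_{n+1} \supset U_n\cup\{w_k\}$ together with an isomorphism $g_{n+1}: L_{n+1} \to X_{U_{n+1}}$ extending $\tilde f_n$. Setting $f_{n+1} := g_{n+1}^{-1}$ gives $f_{n+1}|X_{U_n} = f_n$, and since $w_k \in U_{n+1}$ by construction, iterating yields $\bigcup_n U_n = V(X)$.

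The hard part will be the construction of $L_{n+1}$. My approach is geometric: identify $|L_n|$ with a triangulated facet of $\Delta_{n+2}$, place $w'_k$ in the interior of $\Delta_{n+2}$, and connect it by straight segments to the vertices of $B$, so that the simplices of $w'_k B$ sit inside $\Delta_{n+2}$ as genuine Euclidean simplices. To prevent extraneous adjacencies between $w'_k$ and vertices of $V(L_n)-V(B)$, insert a subdivision vertex at an interior point of each segment from $w'_k$ to such a vertex. Finally I extend to a full simplicial triangulation of $\Delta_{n+2}$ respecting these constraints; a direct check then confirms that the induced subcomplex on $V(L_n)\cup\{w'_k\}$ is exactly $M$, as required.
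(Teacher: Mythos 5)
Your strategy is the same as the paper's: absorb the next vertex $w_k$, realize $X_{U_n\cup\{w_k\}}$ abstractly as $M=L_n\cup w'_kB$, find a triangulation $L_{n+1}$ of an $(n+1)$-simplex containing $M$ as an \emph{induced} subcomplex, and invoke Lemma \ref{lm16} to produce $U_{n+1}$ and $f_{n+1}$; the paper simply asserts that a suitable subdivision of the cone $w'_k\ast L_n$ exists, whereas you try to build $L_{n+1}$ explicitly. It is precisely in that one step, the only one requiring care, that your construction has a gap. Inserting a subdivision vertex on each segment from $w'_k$ to a vertex of $V(L_n)-V(B)$ only forbids extraneous \emph{edges} at $w'_k$. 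It does nothing about extraneous higher-dimensional simplices: if $\tau$ is an external simplex of $B$ in $L_n$ (i.e.\ $\tau\in F(L_n)$, $\tau\notin F(B)$, but $\partial\tau\subset B$, with $\dim\tau\ge 1$), then the whole boundary of the geometric simplex $w'_k\ast|\tau|$ already lies in $|M|$, none of your subdivision points meets it, and an arbitrary extension to a triangulation of $\Delta_{n+2}$ is free to include $w'_k\tau$ as a simplex. Concretely, take $B$ to be $L_n$ with a single edge $e$ (and the simplices containing it) removed: then $V(B)=V(L_n)$, you subdivide nothing, and the triangle $w'_k e$ may appear in $L_{n+1}$, so the subcomplex induced on $V(L_n)\cup\{w'_k\}$ is strictly larger than $M$ and $\lk_X(w_k)\cap X_{U_n}$ is misrepresented.

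The fix is standard and cheap: work inside the cone triangulation $w'_k\ast L_n$ of $\Delta_{n+2}$ (with $w'_k$ the apex vertex, which also spares you the separate problem of triangulating the complement of an interior copy of $|M|$), and star at an interior point every cone simplex $w'_k\tau$ with $\tau\notin F(B)$, processing them in order of decreasing dimension. This is the usual relative subdivision keeping $M$ fixed; after it, any simplex of $L_{n+1}$ spanned by $w'_k$ and vertices of $L_n$ must have all its cone faces unsubdivided, hence lies in $w'_kB$, and $M$ is induced. With that repair (and noting that ``$f_n$ extends to $\tilde f_n$'' should read ``$f_n^{-1}$ extends''), your argument matches the paper's proof.
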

\begin{proof} Let $V(X)=\{v_0, v_1, \dots\}$ be a labelling of the vertices of $X$. 
One constructs the subsets $U_n$ and complexes $L_n$ by induction stating from $U_0=\{v_0\}$ and $L_0=\{v_0\}$. Suppose that the sets $U_i$ and complexes $L_i$ with $i\le n$ have been constructed.  
Consider the subset $U'_{n+1} = U_n\cup \{v_i\}\subset V(X)$ where $i\ge 0$ is the smallest integer satisfying $v_i\notin U_n$. 
The induced simplicial complex $L'_{n+1}=X_{U'_{n+1}}$ has dimension $\le n+1$. Clearly, the complex $L'_{n+1}$ has the form 
$X_{U_n}\cup (v_i\ast A)$ for some subcomplex $A\subset X_{U_n}$. 
We shall apply Lemma \ref{lm16} to the abstract simplicial complexes 
$L'_{n+1}$ and $L_{n+1}$, where $L_{n+1}$ is a subdivision of the cone $v_i\ast X_{U_n}$ such that $L'_{n+1}$ is an induced subcomplex of $L_{n+1}$. Lemma \ref{lm16} gives a subset $U_{n+1}\subset V(X)$ containing $U'_{n+1}$ such that the induced complex $X_{U_{n+1}}$ is isomorphic to $L_{n+1}$. By construction $L_{n+1}$ is a triangulation of $v_i\ast X_{U_n}$, hence it is a triangulation of a simplex of dimension $n+1$.
Obviously, we have $\cup U_n = V(X)$. 
\end{proof}
\begin{theorem} The Rado complex is isomorphic to a triangulation of the simplex $\Delta_\N$. In particular, 
the geometric realisation $|X|$ of the Rado complex is homeomorphic to the infinite dimensional simplex $|\Delta_\N|$. 
\end{theorem}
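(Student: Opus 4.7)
The plan is to use the preceding lemma to assemble $X$ as a direct limit of nested triangulated finite-dimensional simplices, and then recognise the limit as a triangulation of $\Delta_\N$. Apply the lemma to obtain a chain $U_0\subset U_1\subset\dots$ with $\cup U_n=V(X)$ and compatible simplicial isomorphisms $f_n:X_{U_n}\to L_n$, where each $L_n$ triangulates an $n$-dimensional simplex and $L_n$ is an induced subcomplex of $L_{n+1}$. Since $f_{n+1}|X_{U_n}=f_n$ for every $n$, these isomorphisms glue into a single simplicial isomorphism $f:X\to L$, where $L=\bigcup_n L_n$ is viewed as an abstract simplicial complex.

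Next I would construct compatible PL homeomorphisms $h_n:|L_n|\to |\Delta_{n+1}|$ by induction on $n$. By the construction in the lemma, $L_{n+1}$ triangulates a cone $w\ast X_{U_n}$, so $|L_{n+1}|$ carries a natural cone structure over $|L_n|$; on the other side, $|\Delta_{n+2}|$ is likewise a cone over $|\Delta_{n+1}|$ with apex the new vertex. Starting from $h_0$ (the unique map between one-point spaces), I would extend $h_n$ to $h_{n+1}$ via the cone construction: send the cone apex of $|L_{n+1}|$ to the new vertex of $|\Delta_{n+2}|$ and interpolate linearly along cone-rays. The resulting $h_{n+1}$ restricts to $h_n$ on $|L_n|$ and is itself a homeomorphism.

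Finally, set $h=\bigcup_n h_n : |L|\to |\Delta_\N|$. This is a bijection, and its continuity in the weak topology (and that of $h^{-1}$) is immediate: every simplex of $L$ lies in some $|L_n|$ where $h$ agrees with the homeomorphism $h_n$, and likewise every simplex of $\Delta_\N$ lies in some $|\Delta_{n+1}|$ where $h^{-1}$ agrees with $h_n^{-1}$. Thus $h$ is a homeomorphism, which combined with the isomorphism $X\cong L$ exhibits $X$ as a triangulation of $\Delta_\N$ and proves the homeomorphism $|X|\cong |\Delta_\N|$ as an immediate corollary. The main subtlety lies in arranging the inductive cone extension so that the PL homeomorphisms are globally consistent; once this is in place, the direct-limit argument in the weak topology is routine.
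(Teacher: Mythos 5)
Your proposal is correct and follows the same route as the paper, whose proof of this theorem is simply to invoke the preceding lemma; you have merely written out the routine details (gluing the compatible isomorphisms $f_n$, extending cone homeomorphisms inductively, and checking continuity of the union in the weak topology) that the paper leaves implicit. No gaps.
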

\begin{proof}
It follows from the previous Lemma. 
\end{proof}

Note that the geometric realisation $|X|$ of a Rado complex $X$ (equipped with the weak topology)  does not satisfy the first axiom of countability and hence is not metrizable. This follows from the fact that $X$ is not locally finite. See \cite{Sp}, Theorem 3.2.8. 
\begin{corollary}
The geometric realisation $|X|$ of the Rado complex is contractible. 
\end{corollary}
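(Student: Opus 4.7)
The plan is to reduce the statement directly to the contractibility of the infinite-dimensional standard simplex $|\Delta_\N|$. By the preceding theorem, $|X|$ is homeomorphic to $|\Delta_\N|$, so it suffices to exhibit a contraction of $|\Delta_\N|$ equipped with the weak topology.

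First I would fix a vertex $v_0 \in \N$ and define the straight-line homotopy $H: |\Delta_\N|\times [0,1] \to |\Delta_\N|$ by
\begin{equation*}
H(\alpha,t)(v_0) \,=\, (1-t)\alpha(v_0) + t, \qquad H(\alpha,t)(v) \,=\, (1-t)\alpha(v) \ \text{ for } v\neq v_0,
\end{equation*}
so that $H(\alpha,0)=\alpha$ and $H(\alpha,1)$ is the constant map at $v_0$. For each $\alpha$ and $t$ the support of $H(\alpha,t)$ is contained in $\{v_0\}\cup\mathrm{supp}(\alpha)$, which is a simplex of $\Delta_\N$, so $H$ lands in $|\Delta_\N|$ and the coordinates sum to $1$.

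Next I would verify continuity with respect to the weak topology. Since $[0,1]$ is locally compact Hausdorff, the weak topology on $|\Delta_\N|$ has the property that a map out of $|\Delta_\N|\times [0,1]$ is continuous iff its restriction to $|\sigma|\times[0,1]$ is continuous for every simplex $\sigma\in F(\Delta_\N)$ (see \cite{Sp}). For any finite simplex $\sigma$, write $\sigma' = \sigma \cup \{v_0\}$; then $H(|\sigma|\times[0,1]) \subset |\sigma'|$ and the restriction is just a linear straight-line homotopy between two finite-dimensional Euclidean simplices, which is evidently continuous. Hence $H$ is continuous on the whole $|\Delta_\N|\times[0,1]$, providing the desired contraction.

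I do not expect a serious obstacle here; the only subtlety is the continuity check in the weak topology, where one must be careful that the product $|\Delta_\N|\times[0,1]$ really does have the expected CW-type topology. This is exactly the point where local compactness of $[0,1]$ is used, and it is a standard fact about simplicial complexes with the weak topology.
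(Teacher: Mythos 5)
Your proof is correct. You take the first of the two routes the paper offers: the paper's proof opens by observing that the corollary ``follows from the previous Theorem'' (i.e.\ from the homeomorphism $|X|\cong|\Delta_\N|$), which is exactly your reduction, and you then supply the detail the paper leaves implicit, namely an explicit straight-line contraction of $|\Delta_\N|$ to a vertex. Your continuity check is the right one: the only delicate point is that $|\Delta_\N|\times[0,1]$ with the product topology still carries the weak topology determined by the sets $|\sigma|\times[0,1]$, which holds because $[0,1]$ is compact (locally compact Hausdorff), and on each such piece the homotopy is affine with image in the finite simplex $|\sigma\cup\{v_0\}|$. The paper additionally gives a second, independent argument that bypasses the homeomorphism theorem altogether: by Whitehead's theorem it suffices to kill every map $S^n\to |X|$, one simplicially approximates such a map so that its image lies in a finite induced subcomplex $X_U$, and then ampleness (Definition \ref{rado} with $A=X_U$) produces a vertex $v$ coning off $X_U$ inside $X$, so the map factors through a cone and is null-homotopic. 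That second argument is worth knowing because it uses only the combinatorial ampleness property and would apply even without the (stronger) identification of $|X|$ with the infinite-dimensional simplex; your argument, by contrast, yields the stronger conclusion that the contraction is induced by an explicit homeomorphism-plus-linear homotopy.
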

\begin{proof} Corollary follows from the previous Theorem. We also give a short independent proof below. 
Let $X$ be a Rado complex. By the Whitehead theorem we need to show that any continuous map $f: S^n\to X$ is homotopic to the constant map. By the Simplicial Approximation theorem $f$ is homotopic to a simplicial map $g: S^n\to X$. The image $g(S^n)\subset X$ is a finite subcomplex. Applying the property of Definition \ref{rado} to the set of vertices $U$ of $g(S^n)$ and to the subcomplex $A=X_U$ we find a vertex $v\in V(X)-U$ such that the cone $vA$ is a subset of $X$. Since the cone is contractible, we obtain that $g$, which is equal the composition 
$S^n \to A\to vA \to X$, is null-homotopic.
\end{proof}
\begin{remark}{\rm 
The geometric realisation of a simplicial complex carries another natural topology, the metric topology, see \cite{Sp}. 
The geometric realisation of $X$ with the metric topology is denoted $|X|_d$. While for finite simplicial complexes the spaces $|X|$ and 
$|X|_d$ are homeomorphic, it is not true for infinite complexes in general. For the Rado complex $X$ the spaces $|X|$ and $|X|_d$ are not homeomorphic. Moreover,  in general, the metric topology is not invariant under subdivisions, see \cite{Mine}, where this issue is discussed in detail. We do not know if for the Rado complex $X$ the spaces $|X|_d$ and $|\Delta_\N|_d$ are homeomorphic. 
}
\end{remark}

\section{Infinite random simplicial complexes}

We show in the following \S \ref{sec6} that a random infinite simplicial complex is a Rado complex with probability 1, in a certain regime. 
In this section we prepare the grounds and describe the probability measure on the set of infinite simplicial complexes. 

\subsection{}
Let $L$ be a finite simplicial complex. We denote by $F(L)$ the set of simplexes of $L$; besides, $V(L)$ will denote the set of vertexes of $L$. Suppose that with each simplex $\sigma\subset L$ one has associated a probability parameter 
$p_\sigma\in [0,1]$. We shall use the notation $q_\sigma =1- p_\sigma$. 
Given a subcomplex $A\subset L$ we may consider the set $E(A|L)$ consisting of all simplexes of $L$ which are not in $A$ but such that 
all their proper faces are in $A$. Simplexes of $E(A|L)$ are called {\it external for $A$ in $L$}. As an example we mention that any vertex $v\in L-A$ is an external simplex, $v\in E(A|L)$. 

 With a subcomplex $A\subset L$ one may associate the following real number
\begin{eqnarray}\label{pa}
p(A) = \prod_{\sigma\in F( A)} p_\sigma \cdot \prod_{\sigma\in E(A|L)}q_\sigma \, \in \, [0,1].
\end{eqnarray}
For example, taking $A=\emptyset$ we obtain $p(\emptyset)=\prod_{v\in V(L)} q_v$, the product is taken with respect to all vertices $v$ of $L$. 

\begin{lemma} \label{lm21} One has
$\sum_{A\subset L} p(A) =1,$
where $A$ runs over all subcomplexes of $L$, including the empty subcomplex. 
\end{lemma}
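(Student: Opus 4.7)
The plan is to induct on $|F(L)|$. The base case $L=\emptyset$ is immediate: the only subcomplex is $A=\emptyset$, both products in $(\ref{pa})$ are empty, and hence $p(\emptyset)=1$.

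For the inductive step I would choose a maximal simplex $\tau\in F(L)$ and set $L'=L\setminus\{\tau\}$, which is itself a simplicial complex since $\tau$ is maximal. Writing $\partial\tau$ for the subcomplex of proper faces of $\tau$, I would split the subcomplexes of $L$ into three disjoint classes: (i) those $A\subset L'$ with $\partial\tau\not\subset A$; (ii) those $A\subset L'$ with $\partial\tau\subset A$; and (iii) those containing $\tau$, which biject with subcomplexes $A'\subset L'$ satisfying $\partial\tau\subset A'$ via $A=A'\cup\{\tau\}$. The point of this partition is that maximality of $\tau$ makes the set $E(A|L)$ differ from $E(A|L')$ by at most the element $\tau$. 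Concretely: in class (i), $E(A|L)=E(A|L')$ and $p_L(A)=p_{L'}(A)$; in class (ii), $E(A|L)=E(A|L')\cup\{\tau\}$ and $p_L(A)=q_\tau\cdot p_{L'}(A)$; and in class (iii), $F(A)=F(A')\cup\{\tau\}$ while $E(A|L)=E(A'|L')$, giving $p_L(A)=p_\tau\cdot p_{L'}(A')$.

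The final step is to sum: using the bijection between class (iii) and the index set of class (ii) to combine their contributions via $p_\tau+q_\tau=1$, the total $\sum_{A\subset L} p_L(A)$ collapses to $\sum_{A\subset L'} p_{L'}(A)$, which equals $1$ by the inductive hypothesis. The main subtlety is verifying the identity $E(A|L)=E(A'|L')$ in class (iii): one must confirm that no simplex of $L'\setminus A'$ changes its external status when $\tau$ is inserted into both the ambient complex and the subcomplex. This uses maximality of $\tau$, which guarantees that $\tau$ is not a proper face of any simplex of $L'$, so the defining condition ``all proper faces lie in the subcomplex'' is unaffected for any simplex other than $\tau$ itself.
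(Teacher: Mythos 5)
Your proof is correct, but it proceeds differently from the one in the paper. The paper's argument (\S\ref{app}) is a one-shot grouping argument: it expands $1=\prod_{\sigma\in F(L)}(p_\sigma+q_\sigma)$ as a sum over all subsets $J\subset F(L)$, assigns to each $J$ the largest subcomplex $A(J)$ with $F(A(J))\subset J$, observes that the $J$ with $A(J)=A$ are exactly those containing $F(A)$ and disjoint from $E(A|L)$, and then sums out the remaining free simplexes $S(A)=F(L)-F(A)-E(A|L)$ to get a factor of $1$, leaving exactly $p(A)$ for each $A$. (This is really the probabilistic statement that if each simplex is retained independently with probability $p_\sigma$, then $A(J)$ is distributed according to $p(\cdot)$.) Your argument instead inducts on $|F(L)|$ by deleting a maximal simplex $\tau$, and the heart of it is the correct bookkeeping of how $F(A)$ and $E(A|L)$ change in your three classes; the key verifications --- that $\tau\in E(A|L)$ precisely when $\partial\tau\subset A$ and $\tau\notin A$, and that in class (iii) no other simplex changes external status because $\tau$, being maximal, is not a proper face of anything --- are exactly right, and the cancellation $p_\tau+q_\tau=1$ between classes (ii) and (iii) closes the induction. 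Your route is more elementary step by step but requires this local analysis at $\tau$; the paper's route avoids any induction and makes the identity transparent as a partition of the expansion of $\prod(p_\sigma+q_\sigma)$, which is also the form in which the lemma is reused in the additivity computation for $\mu$.
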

The proof can be found in \S \ref{app}.

\subsection{} Let $\Delta=\Delta_\N$ denote the simplex spanned by the set $\N=\{ 1, 2, \dots\}$ of positive integers. 
We shall denote by $\Omega$ the set of all simplicial subcomplexes $X\subset \Delta$. Each simplicial complex $X\in \Omega$ has finite or countable set of vertexes $V(X)\subset \N$ and any finite or countable simplicial complex is isomorphic to one of the complexes $X\in \Omega$. 

\subsection{} Let $\Delta_n$ denote the simplex spanned by the vertices $[n]=\{1, 2, \dots, n\}\subset \N$. Let $\Omega_n$ denote the set of all subcomplexes $Y\subset \Delta_n$. One has the projection
$$\pi_n : \Omega \to \Omega_n, \quad X\mapsto X\cap \Delta_n.$$
In other words, for $X\in \Omega$ the complex $\pi_n(X)\subset \Delta_n$ is the subcomplex of $X$ induced on the vertex set $[n]\subset \N$. 

For a subcomplex $Y\subset \Delta_n$ we shall consider the set
\begin{eqnarray}
Z(Y, n) = \pi_n^{-1}(Y) = \{X\in \Omega; X\cap \Delta_n =Y\} \subset \Omega.
\end{eqnarray}
Note that for $n=n'$ the sets $Z(Y, n)$ and $Z(Y', n')$ are either identical (if and only if $Y=Y'$) of disjoint; for $n>n'$ the intersection 
$Z(Y, n)\cap Z(Y', n')$ is nonempty if and only if $Y\cap \Delta_{n'} = Y'$ and in this case $Z(Y, n) \subset Z(Y', n')$. 
Note also that for 
$n>n'$ and $Y\cap \Delta_{n'} = Y'$ one has 
\begin{eqnarray}\label{adds}
Z(Y', n') = \bigsqcup_j Z(Y_j, n)\end{eqnarray}
where $Y_j\subset \Delta_n$ are all subcomplexes with $Y_j\cap\Delta_{n'} =Y'$; one of these subcomplexes $Y_j$ coincides with $Y$.

Let $\mathcal A$ denote the set of all subsets $Z(Y, n)\subset \Omega$ and $\emptyset$. The set $\mathcal A$ is {\it a semi-ring}, see \cite{Kl}, i.e. 
$\mathcal A$ is $\cap$-closed and for any $A, B\in \mathcal A$ the difference $B-A$ is a finite union of mutually disjoint sets from $\mathcal A$. 
We shall denote by $\mathcal A'$ the $\sigma$-algebra generated by $\mathcal A$. 

\begin{example}\label{ex1}{\rm
Let $U\subset \N$ be a finite subset and let $L$ be a simplicial complex with vertex set $V(L)\subset U$. 
Then the set $\{X\in \Omega; X_U=L\}$ is the union of finitely many elements of the semi-ring $\mathcal A$ and in particular, 
$\{X\in \Omega; X_U=L\}\in \mathcal A'.$
Indeed, let $n$ be an integer such that $U\subset [n]$ and let $Y_j\subset \Delta_n$, for $j\in I$, be the list of all subcomplexes of $\Delta_n$ satisfying $(Y_j)_U=L$; in other words, $Y_j$ induces $L$ on $U$. Then the set $\{X\in \Omega; X_U=L\}$ is the union 
$\sqcup_{j\in I} Z(Y_j, n).$
}
\end{example}

\subsection{} Next we defile a function $\mu: \mathcal A \to \R$ as follows. Fix for every simplex $\sigma\subset \Delta_\N$ a probability parameter $p_\sigma\in [0,1]$. The function  
\begin{eqnarray}\label{system}
 F(\Delta_\N) \to [0, 1], \quad \sigma \mapsto \{p_\sigma\}
\end{eqnarray}
 will be called {\it the system of probability parameters. } Here $\sigma$ runs over all simplexes $\sigma \in F(\Delta_\N)$. We shall use the notation $q_\sigma =1-p_\sigma$. 

For an integer $n\ge 0$ and a subcomplex $Y\subset \Delta_n$ define 
\begin{eqnarray}\label{prod}
\mu(Z(Y, n)) = \prod_{\sigma\in F(Y)} p_\sigma \cdot \prod_{\sigma\in E(Y|\Delta_n)} q_\sigma.
\end{eqnarray}

Let us show that $\mu$ is additive. We know that the set $Z(Y,n)$ equals the disjoint union 
\begin{eqnarray}\label{disjoint}
Z(Y,n) \, =\, \sqcup_{j\in I} Z(Y_j, n+1)\end{eqnarray} 
where $Y_j$ are all subcomplexes of $\Delta_{n+1}$ satisfying $Y_j \cap \Delta_n =Y$. One of these subcomplexes $Y_{j_0}$ equals $Y$ and the others contain the vertex $(n+1)$ and have the form $$Y_j=Y\cup ((n+1)\ast A_j)$$ where $A_j\subset Y$ is a subcomplex. In other words, all complexes $Y_j$ with $j\not=j_0$ are obtained from $Y$ by adding a cone with apex $n+1$ over a subcomplex $A_j\subset Y$. Clearly, any subcomplex $A_j\subset Y$ may occur, including the empty subcomplex $A_j=\emptyset$. 

Applying the definition (\ref{prod}) we have
$$\mu(Z(Y, n+1) = \mu(Z(Y, n))\cdot q_{n+1},$$ and for $j\not=j_0$, 
\begin{eqnarray}
\mu(Z(Y_j, n+1) = \mu(Z(Y, n))\cdot p_{n+1} \cdot \prod_{\sigma\in F(A_j)} p'_\sigma \cdot \prod_{\sigma\in E(A_j|Y)} q'_\sigma,
\end{eqnarray}
where $n+1$ denotes the new added vertex and $p'_\sigma$ denotes the probability parameter $p_{(n+1)\sigma}$ associated to the simplex $(n+1)\ast \sigma$  (the cone over $\sigma$ with apex $n+1$); besides, $q'_\sigma= 1- p'_\sigma$. Hence we obtain, using Lemma \ref{lm21}:
\begin{eqnarray*}
&& \sum_{j\in I} \mu(Z(Y_j, n+1)) \\
&&= \mu(Z(Y, n))\cdot \left\{
q_{n+1} + p_{n+1} \cdot \left[\sum_{A_j\subset Y} \prod_{\sigma\in F(A_j)} p'_\sigma \cdot \prod_{\sigma\in E(A_j|Y} q'_\sigma\right]\right\}\\
&&= \mu(Z(Y, n)).
\end{eqnarray*} Thus we see that $\mu$ is additive with respect to relations of type (\ref{disjoint}). But obviously, by (\ref{adds}), these relations generate all additive relations in $\mathcal A$. This implies that $\mu$ is additive.

Note that $\Omega$ can be naturally viewed as the inverse limit of the finite sets $\Omega_n$, i.e. 
$\Omega = \underset \leftarrow\lim \Omega_n.$ Introducing the discrete topology on each $\Omega_n$ we obtain the inverse limit topology on $\Omega$ and with this topology $\Omega$ is compact and totally disconnected; it is homeomorphic to the Cantor set. The sets $Z(Y, n)\subset \Omega$ are open and closed in this topology, hence they are compact.

Next we  apply Theorem 1.53 from \cite{Kl} to show that $\mu$ extends to a probability measure on the $\sigma$-algebra $\mathcal A'$ generated by $\mathcal A$. This theorem requires for $\mu$ to be additive, $\sigma$-subadditive and $\sigma$-finite. By Theorem 1.36 from \cite{Kl}, $\sigma$-subadditivity is equivalent to $\sigma$-additivity. Recall that $\sigma$-additivity means that for $A=\sqcup_i A_i$ (disjoint union of countably many elements of $\mathcal A$) one has $\mu(A) =\sum_i \mu(A_i)$. In our case, since the sets 
$A_i\subset \Omega$ are open and closed and since $\Omega$ is compact, any representation $A=\sqcup_i A_i$ must be finite and hence $\sigma$-additivity of $\mu$ follows from additivity. 

 For fixed $n$ we have $\Omega=\sqcup Z(Y, n)$ where $Y$ runs over all subcomplexes of $\Delta_n$ (including $\emptyset$).
 Using additivity of $\mu$ and applying Lemma \ref{lm21},  we have 
$\mu(\Omega) = \sum_{Y\subset \Delta_n} \mu(Z(Y, n)) =1.$ This shows that $\mu$ is $\sigma$-finite and hence by Theorem 1.53 from 
\cite{Kl} $\mu$ extends to a probability measure on $\mathcal A'$. The extended measure on $\mathcal A'$ will be denoted by the same symbol $\mu$.

%
\begin{example}\label{ex2}{\rm
As in Example \ref{ex1}, let $U\subset \N$ be a finite subset and let $L$ be a simplicial complex with vertex set $V(L)\subset U$. 
Then 
\begin{eqnarray}\label{prod1}
\mu(\{X\in \Omega; X_U=L\}) = \prod_{\sigma\in F(L)} p_\sigma \cdot \prod_{\sigma\in E(L|\Delta_{U})} q_\sigma.
\end{eqnarray}
Here $\Delta_U$ denotes the simplex spanned by $U$. The proof is left to the reader as an exercise. 
}
\end{example}

\section{Random simplicial complex in the medial regime is Rado}\label{sec6}

In this section we prove that an infinite random simplicial complex in the medial regime is a Rado complex with probability one.\footnote{Finite simplicial complexes in the medial regime were studied in \cite{FM1}.}

\begin{definition}
We shall say that a system of probability parameters $p_\sigma$, see (\ref{system}), is {\it in the medial regime} if there exist  
$0<p_-<p_+<1$ such that the probability parameter $p_\sigma$ satisfies $p_\sigma \in [p_-, p_+]$ for any simplex $\sigma\in F(\Delta_\N)$. 
\end{definition}

In other words, in the medial regime the probability parameters $p_\sigma$ are uniformly bounded away from zero and one.

\begin{theorem}\label{thm26}
A random simplicial complex with countably many vertexes in the medial regime is a Rado complex, with probability one. 
\end{theorem}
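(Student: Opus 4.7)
The plan is to use Theorem \ref{thm1} and prove that the random complex is ample with probability one. Since there are only countably many triples $(U,L,A)$ with $U\subset \N$ finite, $L$ a simplicial complex on vertex set $V(L)\subset U$, and $A$ a subcomplex of $L$, it suffices by a union bound to establish that for every such triple the event
\begin{equation*}
E_{U,L,A}\,=\,\{X_U=L\}\,\cap\,\{\text{no }v\in V(X)-U\text{ satisfies }\lk_X(v)\cap X_U=A\}
\end{equation*}
has $\mu$-measure zero.

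Fix such a triple and enumerate $\N-U=\{v_1,v_2,\dots\}$. Let $W_i$ denote the event that $v_i\in V(X)$ and $\lk_X(v_i)\cap X_U=A$. The first step is to unpack the measure $\mu$ constructed in \S 5 and observe that it is the law of the random complex obtained by taking independent Bernoulli variables $\xi_\sigma\sim\mathrm{Bernoulli}(p_\sigma)$ and declaring $\sigma\in X$ iff $\xi_\tau=1$ for every $\tau\subseteq\sigma$; the product formula in Example \ref{ex2} matches this model. Under this description, $\{X_U=L\}$ is a condition on the finite family $\{\xi_\sigma:\sigma\subset U\}$, while $W_i$ is a condition on the family $\mathcal F_i=\{\xi_{v_i}\}\cup\{\xi_{v_i\ast\sigma}:\sigma\in F(L)\}$. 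The families $\mathcal F_i$ are pairwise disjoint for distinct $i$ and each is disjoint from $\{\xi_\sigma:\sigma\subset U\}$, so the events $W_1,W_2,\dots$ are mutually independent and also independent of $\{X_U=L\}$.

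The second step is a uniform lower bound on $\mu(W_i)$. Unwinding the definition, $W_i$ is the conjunction of the finitely many independent constraints $\xi_{v_i\ast\sigma}=1$ for $\sigma\in F(A)\cup\{\emptyset\}$ (with $v_i\ast\emptyset=\{v_i\}$) and $\xi_{v_i\ast\sigma}=0$ for $\sigma\in F(L)-F(A)$. In the medial regime every such constraint is satisfied with probability at least $c_0=\min(p_-,1-p_+)>0$, hence $\mu(W_i)\geq c_0^N$, where $N=|F(L)|+1$ is a constant depending only on $(U,L,A)$. Combining independence with this bound,
\begin{equation*}
\mu(E_{U,L,A})\,\leq\,\mu\!\left(\bigcap_{i=1}^{\infty}W_i^{\,c}\right)\,=\,\prod_{i=1}^{\infty}\bigl(1-\mu(W_i)\bigr)\,\leq\,\prod_{i=1}^{\infty}(1-c_0^N)\,=\,0,
\end{equation*}
which completes the proof after the countable union bound.

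The main obstacle is not the probabilistic estimate (which is essentially a second Borel--Cantelli argument) but rather the bookkeeping in the first step: one must verify carefully, starting from the additive formula (\ref{prod}) and its extension to $\mathcal A'$, that $\mu$ really is the pushforward of the product Bernoulli measure on the $\xi_\sigma$'s, so that the stated independence of the families $\{\xi_\sigma:\sigma\subset U\}$ and $\mathcal F_1,\mathcal F_2,\dots$ is rigorously available. Once that correspondence is in place, everything else reduces to a finite computation and a standard product argument.
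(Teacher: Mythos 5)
Your proof is correct and follows essentially the same route as the paper: both arguments rest on the conditional independence, given $\{X_U=L\}$, of the events indexed by the new vertex $v$, a uniform positive lower bound on their conditional probabilities in the medial regime, and the second Borel--Cantelli lemma (your infinite product $\prod_i(1-c_0^N)=0$ is exactly that lemma), followed by a countable intersection over the data $(U,L,A)$. Two small points to tighten: the independence claim and the product formula $\mu\bigl(\bigcap_i W_i^{\,c}\bigr)=\prod_i\bigl(1-\mu(W_i)\bigr)$ must be taken conditionally on $\{X_U=L\}$, since unconditionally the events $W_i$ are correlated through $X_U$; and the exact characterisation of $W_i$ only forces $\xi_{v_i\ast\sigma}=0$ for the external simplexes $\sigma\in E(A|L)$ (the remaining $\sigma\in F(L)-F(A)$ are excluded automatically because a smaller face is already missing), though the sub-event you describe is sufficient for the lower bound you need.
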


\begin{proof} For a finite subset $U\subset \N$ and for a simplicial subcomplex $A\subset \Delta_U$ of the simplex $\Delta_U$ consider the set 
\begin{eqnarray}
\Omega^{U, L}= \{X \in \Omega; X_U=L\}.
\end{eqnarray}
This set belongs to the $\sigma$-algebra $\mathcal A'$ and has positive measure, see Example \ref{ex2}. 

Consider also the subset 
$\Omega^{U, L, A, v}\subset \Omega^{U, L}$
consisting of all subcomplexes $X\in \Omega$ satisfying $X_{U\cup v}=L\cup vA$. Here $A\subset L$ is a subcomplex and $v\in \N-U$. 

The conditional probability equals
$$\mu(\Omega^{U, L, A, v}|\Omega^{U, L}) = p_v\cdot \prod_{\sigma\in F(A)} p_{v\sigma} \cdot \prod_{\sigma\in E(A|L)} q_{v\sigma}\ge 
p_-^{|F(A)|}(1-p_+)^{|E(A|L)|}>0,$$
see (\ref{prod1}).
Note that the events $\Omega^{U, L, A, v}$, conditioned on $\Omega^{U, L}$ for various $v$, are independent and the sum of their probabilities is $\infty$. Hence we may apply the Borel-Cantelli Lemma (see \cite{Kl}, page 51) to conclude that the set of complexes $X\in \Omega^{U, L}$ such that 
$X_{U\cup v}=L\cup vA$ for infinitely many vertices $v$ has full measure in $\Omega^{U, L}$. 

By taking a finite intersection with respect to all possible subcomplexes $A\subset L$ this implies that the set 
$\Omega_\ast^{U, L}\subset \Omega^{U, L}$ of simplicial complexes $X\in \Omega^{U, L}$ such that for any subcomplex $A\subset L$ there exists infinitely many 
vertexes $v$ with $X_{U\cup v}=L\cup vA$ has full measure in $\Omega^{U, L}$. 

Since $\Omega = \cap_U \cup_{L\subset \Delta_U} \Omega^{U, L}$ (where $U\subset \N$ runs over all finites subsets) we obtain that the set 
$\cap_U \cup_{L\subset \Delta_U} \Omega^{U, L}_\ast$ has measure 1 in $\Omega$. But the latter set $\cap_U \cup_{L\subset \Delta_U} \Omega^{U, L}_\ast$ is exactly the set of all Rado simplicial complexes, see Lemma \ref{lm31}. 
\end{proof}

\section{Random induced subcomplexes of a Rado complex}

In this section we consider a different situation. Let $X$ be a fixed Rado complex with vertex set $V(X)=\N$. Suppose that each of the vertexes $n\in \N$ is selected at random with probability $p_n\in [0, 1]$ independently of the selection of all other vertexes. 
Denote by $X_\omega$ the subcomplex of $X$ induced on the selected set of vertexes. Here $\omega$ stands for the selection sequence, one may think that $\omega\in \{0, 1\}^\N$. Under which condition on the sequence $\{p_n\}$ the complex $X_\omega$ is Rado with probability 1?

Applying Borel-Cantelli Lemma we get: 

(1) If $\sum p_n <\infty$ then complex $X_\omega$ has finitely many vertexes, with probability 1. 

(2) If $\sum p_n =\infty$ then complex $X_\omega$ has infinitely many vertexes, with probability 1. 

(3) If $\sum q_n <\infty$ (where $q_n=1-p_n$) then the set of vertexes of $X_\omega$ has a finite complement in $\N$ and hence $X_\omega$ is a Rado complex with probability 1. In (3) we use Lemma \ref{lm16}. 

The following result strengthens point (3) above:

\begin{lemma}
Suppose that  for some $p>0$ one has $p_n\ge p>0$ for any $n\in \N$. Then $X_\omega$ is a Rado complex with probability 1. 
\end{lemma}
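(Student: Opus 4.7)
My plan is to verify that $X_\omega$ satisfies the ampleness criterion of Definition \ref{rado} almost surely and then invoke Theorem \ref{thm1} to conclude it is Rado. Since the vertex set $V(X_\omega)\subset\N$ is itself random, I reformulate ampleness as follows: $X_\omega$ is ample if and only if, for every finite subset $U\subset\N$ with $U\subset V(X_\omega)$ and every subcomplex $A\subset X_U=(X_\omega)_U$, there exists a selected vertex $v\in V(X_\omega)-U$ with $\lk_{X_\omega}(v)\cap(X_\omega)_U=A$. The set of pairs $(U,A)$ with $U\subset\N$ finite and $A\subset X_U$ is countable, so it suffices to show that for each such pair the ``bad event''
$$B(U,A)=\{\omega\,:\,U\subset V(X_\omega)\text{ and no } v\in V(X_\omega)-U \text{ satisfies }\lk_{X_\omega}(v)\cap(X_\omega)_U=A\}$$
has probability zero, and then take a countable union.

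The central input is Lemma \ref{lm31}, applied to the ambient Rado complex $X$: for every such pair $(U,A)$, the set
$$Z_{U,A}=\{v\in V(X)-U\,:\,\lk_X(v)\cap X_U=A\}$$
is infinite. Now, on the event $U\subset V(X_\omega)$ one has $(X_\omega)_U=X_U$, and since $X_\omega$ is an induced subcomplex of $X$, $\lk_{X_\omega}(v)=\lk_X(v)\cap X_\omega$ for every selected vertex $v$. Because every vertex of $A$ lies in $U$ and is therefore selected, it follows that any selected $v\in Z_{U,A}$ automatically satisfies $\lk_{X_\omega}(v)\cap(X_\omega)_U=A$. Hence $B(U,A)$ is contained in the event that no vertex of $Z_{U,A}$ is selected, which by independence of the selection variables has probability at most
$$\prod_{v\in Z_{U,A}}(1-p_v)\le\prod_{v\in Z_{U,A}}(1-p)=0,$$
since $1-p<1$ and $|Z_{U,A}|=\infty$. (Taking $(U,A)=(\emptyset,\emptyset)$ in particular shows that $V(X_\omega)$ is infinite almost surely.)

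Taking a countable union of the null events $B(U,A)$ produces a null set outside of which $X_\omega$ satisfies Definition \ref{rado}, so $X_\omega$ is ample and hence, by Theorem \ref{thm1}, a Rado complex. The only subtle point, which is the main obstacle to a careless formulation rather than a genuine difficulty, is that ampleness of $X_\omega$ is stated with respect to its own random vertex set; this is dealt with by indexing the bad events by finite subsets $U\subset\N$ instead of $U\subset V(X_\omega)$, and observing that $B(U,A)$ is vacuous whenever $U\not\subset V(X_\omega)$, so the countable collection $\{B(U,A)\}$ genuinely exhausts all possible ways ampleness of $X_\omega$ could fail.
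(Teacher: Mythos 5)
Your proposal is correct and follows essentially the same route as the paper: both rest on Lemma \ref{lm31} to show the set of witness vertices for each pair $(U,A)$ is infinite, use independence of the selections (you via the direct computation $\prod(1-p)=0$, the paper via Borel--Cantelli, which amounts to the same estimate) to conclude that almost surely a witness is selected, and finish with a countable intersection over the pairs. Your explicit handling of the fact that ampleness of $X_\omega$ is relative to its random vertex set is a welcome clarification of a point the paper leaves implicit.
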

\begin{proof}
Denote by $X_n$ the subcomplex of $X$ induced on the set $[n]= \{1, 2, \dots, n\}\subset \N$. For a subcomplex $A\subset X_n$ consider the set of vertexes $$W(A, n)=\{v\in \N -[n]; \lk_X(v)\cap X_n =A\}.$$
We know that this set is infinite (see Lemma \ref{lm31}) and since each of the elements of this set is a vertex of $X_\omega$ with probability 
at least $p>0$ we obtain (using Borel - Cantelli) that with probability 1 the intersection 
$V(X_\omega) \cap W(A, n)$
is infinite. Hence the set 
\begin{eqnarray}\label{intersection}
\bigcap_{n=1}^\infty \bigcap_{A\subset X_n} \{\omega; |V(X_\omega)\cap W(A, n)|=\infty\} 
\end{eqnarray}
has measure 1 (as intersection of countably many sets of measure one). Here we use $\sigma$-additivity of the Bernoulli measure. 
It is obvious that for any $\omega$ lying in the intersection (\ref{intersection}) the induced complex $X_\omega$ is ample and hence Rado. 
\end{proof}

\section{Proof of Lemma \ref{lm21}}\label{app}
We obviously have
\begin{eqnarray}\label{sum1}
1=\prod_{\sigma\in F(L)}(p_\sigma +q_\sigma) = \sum_{J\subset F(L)} \left(\prod_{\sigma\in J} p_\sigma \cdot \prod_{\sigma\notin J}  q_\sigma\right).
\end{eqnarray}
Note that in the above sum, $J$ can be also the empty set. Denote by $A(J)\subset J$ the set of all simplexes $\sigma\in J$ such that for any face $\tau\subset \sigma$ one has $\tau\in J$. Note that $A= A(J)$ is a simplicial complex, it is the largest simplicial subcomplex of $L$
with $F(A)\subset J$. We also note that the set of external simplexes $E(A|L)$ is disjoint from $J$. 

Fix a subcomplex $A\subset L$ and consider all subsets $J\subset F(L)$ with $A(J)=A$. 
Any such subset $J\subset F(L)$ contains $F(A)$ and is disjoint from $E(A|L)$. 
Conversely, any subset $J\subset F(L)$ containing $F(A)$ and disjoint from $E(A|L)$ satisfies $A(J)=A$. 

Denoting $S(A) = F(L) - F(A) -E(A|L)$ and $I=J\cap S(A)$ we see that any term of (\ref{sum1}) corresponding to a subset $J$ with $A(J)=A$ 
can be written in the form 
\begin{eqnarray}
\left(\prod_{\sigma\in F(A)} p_\sigma \cdot \prod_{\sigma\in E(A|L)} q_\sigma \right) \cdot \left( \prod_{\sigma\in I} p_\sigma \cdot \prod_{\sigma\in S(A)-I} q_\sigma \right) 
\end{eqnarray}
and the first factor above is $p(A)$, see (\ref{pa}). Hence the sum of all terms in the sum (\ref{sum1}) corresponding to the subsets $J$ with $A(J)=A$ equals
\begin{eqnarray}
p(A) \cdot \sum_{I\subset S(A)} \left( \prod_{\sigma\in I} p_\sigma \cdot \prod_{\sigma\in S(A)-I} q_\sigma \right)= p(A)\cdot \prod_{\sigma\in S(A)} (p_\sigma +q_\sigma) = p(A).
\end{eqnarray}
We therefore see that the statement of Lemma \ref{lm21} follows from (\ref{sum1}).

\end{document}